\def\today{\ifcase \month \or
   January \or February \or March \or April \or
   May \or June \or July \or August \or
   September \or October \or November \or December \fi
   \space\number\day , \number\year}
  \newcommand\@dotsep{4.5}
  \def\@tocline#1#2#3#4#5#6#7{\relax
     \ifnum #1>\c@tocdepth 
     \else
     \par \addpenalty\@secpenalty\addvspace{#2}%
     \begingroup \hyphenpenalty\@M
     \@ifempty{#4}{%
     \@tempdima\csname r@tocindent\number#1\endcsname\relax
        }{%
         \@tempdima#4\relax
           }%
      \parindent\z@ \leftskip#3\relax \advance\leftskip\@tempdima\relax
      \rightskip\@pnumwidth plus1em \parfillskip-\@pnumwidth
       #5\leavevmode\hskip-\@tempdima #6\relax
       \leaders\hbox{$\m@th
       \mkern \@dotsep mu\hbox{.}\mkern \@dotsep mu$}\hfill
       \hbox to\@pnumwidth{\@tocpagenum{#7}}\par
       \nobreak
        \endgroup
         \fi}
\begin{document}


\makeatletter
\@addtoreset{figure}{section}
\def\thefigure{\thesection.\@arabic\c@figure}
\def\fps@figure{h,t}
\@addtoreset{table}{bsection}

\def\thetable{\thesection.\@arabic\c@table}
\def\fps@table{h, t}
\@addtoreset{equation}{section}
\def\theequation{
\arabic{equation}}
\makeatother

\newcommand{\bfi}{\bfseries\itshape}

\newtheorem{theorem}{Theorem}
\newtheorem{acknowledgment}[theorem]{Acknowledgment}
\newtheorem{corollary}[theorem]{Corollary}
\newtheorem{definition}[theorem]{Definition}
\newtheorem{example}[theorem]{Example}
\newtheorem{lemma}[theorem]{Lemma}
\newtheorem{notation}[theorem]{Notation}
\newtheorem{proposition}[theorem]{Proposition}
\newtheorem{remark}[theorem]{Remark}
\newtheorem{setting}[theorem]{Setting}

\numberwithin{theorem}{section}
\numberwithin{equation}{section}

\newcommand{\1}{{\bf 1}}
\newcommand{\Ad}{{\rm Ad}}
\newcommand{\Alg}{{\rm Alg}\,}
\newcommand{\Aut}{{\rm Aut}\,}
\newcommand{\ad}{{\rm ad}}
\newcommand{\Borel}{{\rm Borel}}
\newcommand{\Ci}{{\mathcal C}^\infty}
\newcommand{\Cint}{{\mathcal C}^\infty_{\rm int}}
\newcommand{\Cpol}{{\mathcal C}^\infty_{\rm pol}}
\newcommand{\Der}{{\rm Der}\,}
\newcommand{\de}{{\rm d}}
\newcommand{\ee}{{\rm e}}
\newcommand{\End}{{\rm End}\,}
\newcommand{\ev}{{\rm ev}}
\newcommand{\hotimes}{\widehat{\otimes}}
\newcommand{\id}{{\rm id}}
\newcommand{\ie}{{\rm i}}
\newcommand{\iotaR}{\iota^{\rm R}}
\newcommand{\GL}{{\rm GL}}
\newcommand{\gl}{{{\mathfrak g}{\mathfrak l}}}
\newcommand{\Hom}{{\rm Hom}\,}
\newcommand{\Img}{{\rm Im}\,}
\newcommand{\Ind}{{\rm Ind}}
\newcommand{\Ker}{{\rm Ker}\,}
\newcommand{\Lie}{\text{\bf L}}
\newcommand{\Mt}{{{\mathcal M}_{\text t}}}
\newcommand{\m}{\text{\bf m}}
\newcommand{\pr}{{\rm pr}}
\newcommand{\Ran}{{\rm Ran}\,}
\renewcommand{\Re}{{\rm Re}\,}
\newcommand{\so}{\text{so}}
\newcommand{\spa}{{\rm span}\,}
\newcommand{\Tr}{{\rm Tr}\,}
\newcommand{\tw}{\ast_{\rm tw}}
\newcommand{\Op}{{\rm Op}}
\newcommand{\U}{{\rm U}}
\newcommand{\UCb}{{{\mathcal U}{\mathcal C}_b}}
\newcommand{\weak}{\text{weak}}

\newcommand{\QuadrHilb}{\textbf{QuadrHilb}}
\newcommand{\LieGr}{\textbf{LieGr}}

\newcommand{\CC}{{\mathbb C}}
\newcommand{\RR}{{\mathbb R}}
\newcommand{\TT}{{\mathbb T}}

\newcommand{\Ac}{{\mathcal A}}
\newcommand{\Bc}{{\mathcal B}}
\newcommand{\Cc}{{\mathcal C}}
\newcommand{\Dc}{{\mathcal D}}
\newcommand{\Ec}{{\mathcal E}}
\newcommand{\Fc}{{\mathcal F}}
\newcommand{\Hc}{{\mathcal H}}
\newcommand{\Jc}{{\mathcal J}}
\newcommand{\Lc}{{\mathcal L}}
\renewcommand{\Mc}{{\mathcal M}}
\newcommand{\Nc}{{\mathcal N}}
\newcommand{\Oc}{{\mathcal O}}
\newcommand{\Pc}{{\mathcal P}}
\newcommand{\Qc}{{\mathcal Q}}
\newcommand{\Sc}{{\mathcal S}}
\newcommand{\Tc}{{\mathcal T}}
\newcommand{\Vc}{{\mathcal V}}
\newcommand{\Uc}{{\mathcal U}}
\newcommand{\Xc}{{\mathcal X}}
\newcommand{\Yc}{{\mathcal Y}}
\newcommand{\Wig}{{\mathcal W}}

\newcommand{\Bg}{{\mathfrak B}}
\newcommand{\Fg}{{\mathfrak F}}
\newcommand{\Gg}{{\mathfrak G}}
\newcommand{\Ig}{{\mathfrak I}}
\newcommand{\Jg}{{\mathfrak J}}
\newcommand{\Lg}{{\mathfrak L}}
\newcommand{\Pg}{{\mathfrak P}}
\newcommand{\Sg}{{\mathfrak S}}
\newcommand{\Xg}{{\mathfrak X}}
\newcommand{\Yg}{{\mathfrak Y}}
\newcommand{\Zg}{{\mathfrak Z}}

\newcommand{\ag}{{\mathfrak a}}
\newcommand{\bg}{{\mathfrak b}}
\newcommand{\dg}{{\mathfrak d}}
\renewcommand{\gg}{{\mathfrak g}}
\newcommand{\hg}{{\mathfrak h}}
\newcommand{\kg}{{\mathfrak k}}
\newcommand{\mg}{{\mathfrak m}}
\newcommand{\n}{{\mathfrak n}}
\newcommand{\og}{{\mathfrak o}}
\newcommand{\pg}{{\mathfrak p}}
\newcommand{\sg}{{\mathfrak s}}
\newcommand{\tg}{{\mathfrak t}}
\newcommand{\ug}{{\mathfrak u}}
\newcommand{\zg}{{\mathfrak z}}

\newcommand{\ZZ}{\mathbb Z}
\newcommand{\NN}{\mathbb N}
\newcommand{\BB}{\mathbb B}
\newcommand{\HH}{\mathbb H}

\newcommand{\ep}{\varepsilon}

\newcommand{\hake}[1]{\langle #1 \rangle }

\newcommand{\scalar}[2]{\langle #1 ,#2 \rangle }
\newcommand{\vect}[2]{(#1_1 ,\ldots ,#1_{#2})}
\newcommand{\norm}[1]{\Vert #1 \Vert }
\newcommand{\normrum}[2]{{\norm {#1}}_{#2}}

\newcommand{\upp}[1]{^{(#1)}}
\newcommand{\p}{\partial}

\newcommand{\opn}{\operatorname}
\newcommand{\slim}{\operatornamewithlimits{s-lim\,}}
\newcommand{\sgn}{\operatorname{sgn}}

\newcommand{\seq}[2]{#1_1 ,\dots ,#1_{#2} }
\newcommand{\loc}{_{\opn{loc}}}

\makeatletter
\title[Algebras of symbols associated with the Weyl calculus]{Algebras of symbols associated with the Weyl calculus for Lie group representations}
\author{Ingrid Belti\c t\u a 
 and Daniel Belti\c t\u a
}
\address{Institute of Mathematics ``Simion Stoilow'' 
of the Romanian Academy, 
P.O. Box 1-764, Bucharest, Romania}
\email{Ingrid.Beltita@imar.ro}
\email{Daniel.Beltita@imar.ro}
\keywords{Weyl calculus; involutive Banach algebra; Wiener property; Lie group; modulation spaces}
\subjclass[2000]{Primary 47G30; Secondary 22E25, 22E65, 47G10}
\date{February 7, 2011}
\makeatother

\begin{abstract}
We develop our earlier approach to the Weyl calculus for representations of infinite-dimensional Lie groups 
by establishing continuity properties of the Moyal product for symbols belonging to various modulation spaces. 
For instance, we prove that the modulation space of symbols $M^{\infty,1}$ is an associative Banach algebra and 
the corrresponding operators are bounded. 
We then apply the abstract results to two classes of representations, namely the unitary irreducible representations of nilpotent Lie groups, and the natural representations of the semidirect product groups that govern the magnetic Weyl calculus. 
The classical Weyl-H\"ormander calculus is obtained for 
the Schr\"odinger representations of the finite-dimensional Heisenberg groups, and in this case we recover 
the results obtained by J.~Sj\"ostrand in 1994. 
\end{abstract}

\maketitle

\tableofcontents

\section{Introduction}\label{Sect1}

A quite important class of symbols for pseudo-differential operators was introduced by 
J.~Sj\"ostrand in \cite{Sj94} (see also \cite{Sj95}). 
He denoted this class by $S(1)$ and pointed out that it has a number of remarkable properties, 
such as: 
\begin{enumerate}
\item\label{S1} 
For every symbol $a\in S(1)$ the corresponding operator $\Op (a)$ obtained by 
the pseudo-differential Weyl calculus is bounded on $L^2(\RR^n)$. 
\item\label{S2} 
The class $S(1)$ has a natural structure of unital involutive associative Banach algebra 
such that the mapping $\Op\colon S(1)\to\Bc(L^2(\RR^n))$ is a continuous $*$-homo\-morphism. 
\item\label{S3} 
If a symbol $a\in S(1)$ has the property that the operator $\Op(a)$ is invertible in $\Bc(L^2(\RR^n))$, 
then there exists $b\in S(1)$ such that $\Op(a)^{-1}=\Op(b)$. 
\end{enumerate}
It was later realized that the class $S(1)$ is actually the modulation space $M^{\infty,1}(\RR^{2n})$ 
(see for instance \cite{Gr01} for a broad discussion), and thus the above three properties become as many statements in representation theory of the Heisenberg groups. 

The aim of the present paper is to present the deep representation theoretic background of 
properties~\eqref{S1}--\eqref{S3}, in the sense that we obtain  below, in Theorem~\ref{th_sj}, their appropriate versions for some representations of infinite-dimensional Lie groups and their localized Weyl calculus proposed in our earlier papers \cite{BB09a} and \cite{BB10d}. 
We then apply these abstract results to two classes of representations: 

--- Representations of some infinite-dimensional Lie groups constructed as semidirect products of Lie groups and invariant function spaces thereon. 
We have pointed out in \cite{BB09a} and \cite{BB10a} that these semidirect products are the symmetry groups of the magnetic Weyl calculus developed for instance in \cite{MP04}, \cite{IMP07}, \cite{MP10} and \cite{IMP10}, 
and we thus find versions of the aforementioned properties in this setting. 

--- Unitary irreducible representations of arbitrary nilpotent Lie groups. 
The Weyl correspondence for these representations was developed by \cite{Pe94}, and we have later introduced in \cite{BB10c}  the modulation spaces in this framework and established continuity properties of the operators constructed by the corresponding Weyl calculus. 
In particular we found a space of symbols, which for the Heisenberg group reduces to Sj\"ostrand's class, and gives rise to bounded operators. 
We now show that that space of symbols has all the above properties~\eqref{S1}--\eqref{S3} in the case of an arbitrary irreducible representation of a nilpotent Lie group. 

The present paper is a sequel to \cite{BB10d} and relies on the methods developed there. 
In addition, we use some ideas contained in the deep analysis of Sj\"ostrand's class in \cite{Gr06}.  

\section{Sj\"ostrand's algebra of symbols in an abstract setting}\label{Sect2}

In this section we rely on some terminology and results of \cite{BB10d}. 
For the reader's convenience we recall in the first subsection (`Preliminaries') the framework and the basic ideas needed in the sequel. 
Sections \ref{Sect3} and \ref{Sect4} are devoted to discussing wide classes of examples satisfying the assumptions of this abstract framework. 

\subsection*{Preliminaries}
\begin{setting}\label{loc0}
\normalfont
Throughout this section we keep the following notation: 
\begin{enumerate}
\item $M$ is a locally convex Lie group (see \cite{Ne06}) with 
a smooth exponential mapping $\exp_M\colon\Lie(M)=\mg\to M$.
\item $\pi\colon M\to\Bc(\Hc)$ 
is a twice nuclearly smooth unitary representation (\cite[Def.~2.2]{BB10d}). 
For instance, $\pi$ may be any unitary irreducible representation of a nilpotent Lie group. 
See Section~\ref{Sect3} for another class of examples. 
\item $\Xi$ is a finite-dimensional vector space with a Lebesgue measure and $\Xi^*$ is a manifold with a polynomial structure (see \cite{Pe89}) and a Radon measure, endowed with a function 
$\langle\cdot,\cdot\rangle\colon\Xi^*\times\Xi\to\RR$ 
which is linear in the second variable and 
such that the ``Fourier transform''
$$\widehat{\cdot}
\colon L^1(\Xi)\to L^\infty(\Xi^*), \quad 
b(\cdot)\mapsto\widehat{b}(\cdot)
=\int\limits_{\Xi}\ee^{-\ie\langle\cdot,x\rangle}b(x)\,\de x $$
gives a linear topological isomorphism $\Sc(\Xi)\to\Sc(\Xi^*)$ and a 
unitary operator $L^2(\Xi)\to L^2(\Xi^*)$. 
The inverse of this transform is denoted by $a\mapsto\check a$. 
\item $\theta\colon \Xi\to\mg$ is a linear mapping such that 
\begin{enumerate}
\item $\pi$ satisfies the orthogonality relations along $\theta$ (\cite[Def.~3.2]{BB10d}); 
\item $\pi$ satisfies the density condition along $\theta$ (\cite[Def.~3.6]{BB10d}); 
\item the localized Weyl calculus for $\pi$ along $\theta$ is regular (\cite[Def.~3.10]{BB10d}); 
\item for every $u \in \U(\mg_{\CC})$ the function $\Vert \de \pi (u) \pi(\exp_M(\theta(\cdot)))\phi_0 \Vert$ has polynomial growth. 
\qed
\end{enumerate}
\end{enumerate}
\end{setting}

\begin{remark}\label{pol}
\normalfont
The setting of \cite{BB10d} is actually slightly narrower in the sense that $\Xi^*$ was supposed to be a finite-dimensional linear space 
and $\langle\cdot,\cdot\rangle\colon\Xi^*\times\Xi\to\RR$ was supposed to be a duality pairing. 
However, it is easily seen that the above setting ensures that the main results of \cite{BB10d} hold true. 
\qed
\end{remark}

\begin{notation}\label{not}
\normalfont
Here we summarize some additional notation related to the above setting: 
\begin{enumerate}
\item $\Hc_\infty$ is the space of smooth vectors for the representation $\pi$, 
which has a structure of nuclear Fr\'echet space. 
\item $\Hc_{-\infty}$ is the space of continuous antilinear functionals on $\Hc_\infty$, endowed with the topology of uniform convergence on the bounded sets. 
There exist the dense embeddings $\Hc_\infty\hookrightarrow\Hc\hookrightarrow\Hc_{-\infty}$,   
and the duality pairing $(\cdot\mid\cdot)\colon\Hc_{-\infty}\times\Hc_\infty\to\CC$ 
agrees with the scalar product of~$\Hc$.
\item $\Bc(\Hc)_\infty$ is the space of 
smooth vectors for the unitary representation 
$$\pi\otimes\bar\pi\colon M\times M\to\Bc(\Sg_2(\Hc)),\quad 
(\pi\otimes\bar\pi)(m_1,m_2)T=\pi(m_1)T\pi(m_2)^{-1}.$$
This is also a nuclear Fr\'echet space since  $\Bc(\Hc)_\infty\simeq\Hc_\infty\hotimes\Hc_\infty$ 
(see\cite[Eq.~(2.1)]{BB10d})). 
\item\label{not_ambiguity} 
$\Ac^{\pi,\theta}_\phi f\in\Cc(\Xi)\cap\Sc'(\Xi)$ 
is the \emph{ambiguity function} for the representation~$\pi$ along the mapping $\theta$, defined for  
$\phi\in\Hc_\infty$ and $f\in\Hc_{-\infty}$  
by  the formula
$$\Ac^{\pi,\theta}_\phi f\colon\Xi\to\CC,\quad 
(\Ac^{\pi,\theta}_\phi f)(\cdot)=(f\mid\pi(\exp_M(\theta(\cdot)))\phi). $$
\item $\Wig(f,\phi)\in\Sc'(\Xi^*)$ is the \emph{cross-Wigner distribution} 
for~$\pi$ along~$\theta$, defined by the formula 
$\widehat{\Wig(f,\phi)}=\Ac^{\pi,\theta}_\phi f$ for  
$\phi\in\Hc_\infty$ and $f\in\Hc_{-\infty}$. 
\item $M^{p,q}_\phi(\pi,\theta)$ are the \emph{modulation spaces} constructed for $p,q\in[1,\infty]$ with respect to a decomposition into a direct sum of subspaces $\Xi=\Xi_1\dotplus\Xi_2$ 
 and the \emph{window vector} $\phi\in\Hc_\infty\setminus\{0\}$. 
 Specifically, for any measurable function $F\colon \Xi\simeq\Xi_1\times\Xi_2\to\CC$ define 
$$\Vert F\Vert_{L^{p,q}(\Xi_1\times\Xi_2)}
=\Bigl(\int\limits_{\Xi_1}
\Bigl(\int\limits_{\Xi_2}
\vert F(X_1,X_2)\vert^p\de X_1 \Bigr)^{p/q}
\de X_2\Bigr)^{1/q}\in[0,\infty] $$
with the usual conventions if $p$ or $q$ is infinite. 
Then  
$$M^{p,q}_\Phi(\pi,\theta):=\{f\in\Hc_{-\infty}\mid\Vert f\Vert_{M^{p,q}_\phi(\pi,\theta)}
:=\Vert\Ac^{\pi,\theta}_\phi f \Vert_{L^{p,q}(\Xi_1\times\Xi_2)}<\infty\}.$$ 
If $1\le p_1\le p_2\le\infty$ and $1\le q_1\le q_2\le\infty$, 
then 
\begin{equation}\label{not_eq1}
M^{p_1,q_1}_\phi(\pi,\theta)\cap M^{\infty,\infty}_\phi(\pi,\theta)
\subseteq M^{p_2,q_2}_\phi(\pi,\theta)\cap M^{\infty,\infty}_\phi(\pi,\theta), 
\end{equation}
since  
$L^{p_1,q_1}(\Xi_1\times\Xi_2)\cap L^{\infty}(\Xi)\subseteq 
L^{p_2,q_2}(\Xi_1\times\Xi_2)\cap L^{\infty}(\Xi)$.  
\qed
\end{enumerate}
\end{notation}

\begin{remark}\label{moyal}
\normalfont 
We recall that the \emph{localized Weyl calculus for $\pi$ along~$\theta$} 
is the mapping $\Op^\theta\colon\widehat{L^1(\Xi)}\to\Bc(\Hc)$ given by 
\begin{equation}\label{moyal_eq1}
\Op^\theta(a)
=\int\limits_{\Xi} \check{a}(X)\pi(\exp_M(\theta(X)))\,\de X
\end{equation}
for $a\in\widehat{L^1(\Xi)}$, where we use weakly convergent integrals.
Under the present assumptions, it follows by \cite[Prop.~3.12]{BB10d} 
that the localized Weyl calculus defines a linear topological isomorphism 
$\Op^\theta\colon\Sc(\Xi^*)\to\Bc(\Hc)_\infty\simeq\Lc(\Hc_{-\infty},\Hc_\infty)$, 
its dual topological isomorphism $\Op^\theta\colon\Sc'(\Xi^*)\to\Lc(\Hc_\infty,\Hc_{-\infty})$,  
and the mapping 
\begin{equation}\label{moyal_eq2}
\Op^\theta\colon L^2(\Xi^*)\to\Sg_2(\Hc),
\end{equation} 
which is a unitary operator.  

If $a,b\in\Sc'(\Xi^*)$ and 
the operator product 
$\Op^\theta(a)\Op^\theta(b)\in\Lc(\Hc_\infty,\Hc_{-\infty})$ 
is well defined, then  
the \emph{Moyal product} $a\#^\theta b\in\Sc'(\Xi^*)$ 
is uniquely determined by the condition 
$$\Op^\theta(a\#^\theta b)=\Op^\theta(a)\Op^\theta(b).$$ 
Thus the Moyal product defines bilinear mappings 
$\Sc(\Xi^*)\times\Sc(\Xi^*)\to\Sc(\Xi^*)$ and 
$L^2(\Xi^*)\times L^2(\Xi^*)\to L^2(\Xi^*)$.
\qed
\end{remark}

\begin{definition}\label{basic}
\normalfont
Let us consider the semi-direct product $M\ltimes M$ defined by 
the action of $M$ on itself by inner automorphisms. 
Thus $M\ltimes M$ is a locally convex Lie group whose underlying manifold is $M\times M$ 
and the group operation is  
$$(m_1,m_2)(n_1,n_2)=(m_1n_1, n_1^{-1}m_2n_1n_2)$$
for all $m_1,m_2,n_1,n_2\in M$. 
There exists the natural continuous unitary representation 
$$\pi^\ltimes\colon M\ltimes M\to\Bc(\Sg_2(\Hc)),\quad 
\pi^\ltimes(m_1,m_2)T=\pi(m_1m_2)T\pi(m_1)^{-1}.$$
By using the unitary operator~\eqref{moyal_eq2}, 
we can construct the unitarily equivalent representation 
$\pi^\#\colon M\ltimes M\to\Bc(L^2(\Xi^*))$ 
(see \cite[Def.~3.13]{BB10d}). 
\qed
\end{definition}

\begin{remark}\label{ambiguity}
\normalfont 
The representation 
$\pi^\#\colon M\ltimes M\to\Bc(L^2(\Xi^*))$ has the following useful properties.
\begin{enumerate}
\item For every  $X_1,X_2\in\Xi$ and $f\in L^2(\Xi^*)$ we have \cite[Remark 3.14(2)]{BB10d}
\begin{equation}\label{ambiguity_eq1}
\pi^\#(\exp_{M\ltimes M}(\theta(X_1),\theta(X_2))f
=\ee^{\ie\langle\cdot,X_1+X_2\rangle}\#^\theta f\#^\theta \ee^{-\ie\langle\cdot,X_1\rangle}.
\end{equation}
\item For $F\in\Sc'(\Xi^*)$ and $\Phi\in\Sc(\Xi^*)$ we can consider the ambiguity function $\Ac^{\pi^\#,\theta\times\theta}_\Phi\colon\Xi\times\Xi\to\CC$ of 
the representation $\pi^\#$ along the linear mapping 
$\theta\times\theta\colon\Xi\times\Xi\to\mg\ltimes\mg$, 
just as in Notation~\ref{not}\eqref{not_ambiguity}, by 
the formula  
$$(\Ac^{\pi^\#,\theta\times\theta}_\Phi F)(X_1,X_2)=
(F\mid \pi^\#(\exp_{M\ltimes M}(\theta(X_1),\theta(X_2))\Phi))$$
for all $X_1,X_2\in\Xi$. 
We also define 
$$\Vert F\Vert_{M^{r,s}_\Phi(\pi^\#,\theta\times\theta)}
=\Bigl(\int\limits_{\Xi}
\Bigl(\int\limits_{\Xi}
\vert(\Ac^{\pi^\#,\theta\times\theta}_\Phi F)(X_1,X_2)\vert^r\de X_1 \Bigr)^{s/r}
\de X_2\Bigr)^{1/s}\in[0,\infty] $$
with the usual conventions if $r$ or $s$ is infinite. 
The space 
$$M^{r,s}_\Phi(\pi^\#,\theta\times\theta):=\{F\in\Sc'(\Xi^*)\mid\Vert F\Vert_{M^{r,s}_\Phi(\pi^\#,\theta\times\theta)}<\infty\}$$ 
is a \emph{modulation space} of symbols for the localized Weyl calculus $\Op^\theta$ associated with 
the unitary representation $\pi\colon M\to\Bc(\Hc)$ 
along with the linear mapping $\theta\colon \Xi\to\mg$
for the \emph{window vector} $\Phi\in\Sc(\Xi^*)\setminus\{0\}$. 
\end{enumerate}
\qed
\end{remark}

\subsection*{Ambiguity functions and matrix coefficients}

Here is a general version of the usual covariance property of the cross-Wigner distribution 
(see \cite[Prop.~4.3.2]{Gr01}).

\begin{lemma}\label{cov}
For all $f_1,f_2\in\Hc_{-\infty}$ and $X_1,X_2\in\Xi$ we have the equation 
$$\begin{aligned}
\Wig(\pi(\exp_M(\theta(X_1)))f_1, & \pi(\exp_M(\theta(X_2)))f_2) \\
&=\pi^\#(\exp_{M\ltimes M}(\theta(X_2),\theta(X_1-X_2)))\Wig(f_1,f_2)
\end{aligned}$$
in $\Sc'(\Xi^*)$. 
\end{lemma}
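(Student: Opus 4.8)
The plan is to reduce the identity about cross-Wigner distributions to the corresponding identity about ambiguity functions, since the two are related by the ``Fourier transform'' $\widehat{\cdot}\colon L^1(\Xi)\to L^\infty(\Xi^*)$ which is injective on tempered distributions. By definition, $\widehat{\Wig(g_1,g_2)}=\Ac^{\pi,\theta}_{g_2}g_1$, and similarly the Wigner distributions on the right-hand side are governed by ambiguity functions of $\pi^\#$. So the first step is to write down explicitly, using Notation~\ref{not}\eqref{not_ambiguity}, the function
$$
\Ac^{\pi,\theta}_{\pi(\exp_M(\theta(X_2)))f_2}\bigl(\pi(\exp_M(\theta(X_1)))f_1\bigr)(Y)
=\bigl(\pi(\exp_M(\theta(X_1)))f_1\;\big|\;\pi(\exp_M(\theta(Y)))\pi(\exp_M(\theta(X_2)))f_2\bigr)
$$
for $Y\in\Xi$, and to manipulate the matrix coefficient by moving $\pi(\exp_M(\theta(X_1)))$ to the right-hand slot as its adjoint $\pi(\exp_M(\theta(X_1)))^{-1}=\pi(\exp_M(-\theta(X_1)))$. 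This turns the expression into a matrix coefficient $\bigl(f_1\mid\pi(\exp_M(-\theta(X_1)))\pi(\exp_M(\theta(Y)))\pi(\exp_M(\theta(X_2)))f_2\bigr)$, i.e.\ the ambiguity function $\Ac^{\pi,\theta}_{f_2}f_1$ evaluated after composing $\pi$ with a product of three exponentials parametrized by $Y$.

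Next I would compute, on the other side, what $\pi^\#(\exp_{M\ltimes M}(\theta(X_2),\theta(X_1-X_2)))$ does to $\Wig(f_1,f_2)$. Here the key tool is the intertwining between $\pi^\#$ and the $\Op^\theta$-picture together with formula~\eqref{ambiguity_eq1}, which expresses $\pi^\#(\exp_{M\ltimes M}(\theta(Z_1),\theta(Z_2)))$ as a two-sided Moyal multiplication by the exponentials $\ee^{\ie\langle\cdot,Z_1+Z_2\rangle}$ and $\ee^{-\ie\langle\cdot,Z_1\rangle}$. Translating to operators via $\Op^\theta$: the operator attached to $\ee^{\ie\langle\cdot,Z\rangle}$ is, by~\eqref{moyal_eq1} and the Fourier inversion convention, essentially $\pi(\exp_M(\theta(Z)))$ up to the normalization built into $\widehat{\cdot}$. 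Thus $\Op^\theta$ of $\pi^\#(\exp_{M\ltimes M}(\theta(X_2),\theta(X_1-X_2)))\Wig(f_1,f_2)$ equals $\pi(\exp_M(\theta(X_1)))\,\Op^\theta(\Wig(f_1,f_2))\,\pi(\exp_M(\theta(X_2)))^{-1}$ after substituting $Z_1=X_2$, $Z_1+Z_2=X_1$. Recognizing $\Op^\theta(\Wig(f_1,f_2))$ as the rank-one-type operator $f_1\otimes\overline{f_2}$ (this is the standard identification of the Weyl symbol of $|f_1\rangle\langle f_2|$ with the Wigner distribution, valid here by the unitarity in~\eqref{moyal_eq2} extended by duality), the right-hand side becomes $\Op^\theta$ of $\Wig\bigl(\pi(\exp_M(\theta(X_1)))f_1,\pi(\exp_M(\theta(X_2)))f_2\bigr)$, and injectivity of $\Op^\theta$ closes the argument.

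The main obstacle, and the point requiring the most care, is the bookkeeping of normalization constants and of the precise relation between $\Op^\theta(\ee^{\ie\langle\cdot,Z\rangle})$ and $\pi(\exp_M(\theta(Z)))$: the ``Fourier transform'' in Setting~\ref{loc0} is only assumed to be a topological and $L^2$-isomorphism, so one must track which convention makes $\check{(\ee^{\ie\langle\cdot,Z\rangle})}$ a (multiple of a) Dirac mass at $Z$, and confirm that the same constant appears on both sides so that it cancels. A secondary technical point is justifying the manipulations for general $f_1,f_2\in\Hc_{-\infty}$ rather than smooth vectors: one argues first for $f_1,f_2\in\Hc_\infty$, where all pairings and operator products are literally defined and $\Op^\theta(\Wig(f_1,f_2))=|f_1\rangle\langle f_2|\in\Lc(\Hc_{-\infty},\Hc_\infty)$, and then extends to $\Hc_{-\infty}$ by the density $\Hc_\infty\hookrightarrow\Hc_{-\infty}$ and the separate continuity of all the maps involved (the action of $\pi(\exp_M(\theta(X_i)))$ on $\Hc_{-\infty}$, the Wigner map $\Hc_{-\infty}\times\Hc_{-\infty}\to\Sc'(\Xi^*)$, and $\pi^\#$ on $\Sc'(\Xi^*)$), all of which are available from \cite{BB10d} under the standing assumptions. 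Once the smooth-vector case is verified, the general case is a routine continuity/density argument and I would not belabor it.
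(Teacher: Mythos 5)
Your argument is correct and is essentially the paper's own proof: apply $\Op^\theta$ to both sides, identify $\Op^\theta(\Wig(f_1,f_2))$ with the rank-one operator $(\cdot\mid f_2)f_1$, conjugate by $\pi(\exp_M(\theta(X_1)))$ and $\pi(\exp_M(\theta(X_2)))^{-1}$, invoke~\eqref{ambiguity_eq1} with the substitution $(X_2,X_1-X_2)$, and conclude by the injectivity of $\Op^\theta\colon\Sc'(\Xi^*)\to\Lc(\Hc_\infty,\Hc_{-\infty})$. The extra density/continuity step you propose for $f_1,f_2\in\Hc_{-\infty}$ is unnecessary (the computation is valid directly in $\Lc(\Hc_\infty,\Hc_{-\infty})$), and with the paper's conventions $\Op^\theta(\ee^{\ie\langle\cdot,Z\rangle})=\pi(\exp_M(\theta(Z)))$ holds with no extra normalization constant, so that worry also evaporates.
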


\begin{proof}
We have 
$$\begin{aligned}
\Op^\theta(\Wig(\pi(\exp_M(\theta(X_1)))f_1, &\pi(\exp_M(\theta(X_2)))f_2)) \\
&=(\cdot\mid\pi(\exp_M(\theta(X_2)))f_2)\pi(\exp_M(\theta(X_1)))f_1 \\
&=\pi(\exp_M(\theta(X_1)))((\cdot\mid f_2)f_1)\pi(\exp_M(\theta(-X_2))) \\
&=\Op^\theta(\ee^{\ie\langle\cdot,X_1\rangle})
\Op^\theta(\Wig(f_1,f_2))
\Op^\theta(\ee^{-\ie\langle\cdot,X_2\rangle}) \\
&=\Op^\theta(\ee^{\ie\langle\cdot,X_1\rangle}\#^\theta\Wig(f_1,f_2))
\#^\theta\ee^{-\ie\langle\cdot,X_2\rangle}) \\
&=\Op^\theta(\pi^\#(\exp_{M\ltimes M}(\theta(X_2),\theta(X_1-X_2)))\Wig(f_1,f_2)),
\end{aligned}$$
where the latter equality relies on~\eqref{ambiguity_eq1}. 

Then the assertion follows since $\Op^\theta\colon\Sc'(\Xi^*)\to\Lc(\Hc_\infty,\Hc_{-\infty})$ 
is a linear topological isomorphism. 
\end{proof}

\begin{theorem}\label{main}
If $\phi\in\Hc_\infty$ and $a\in\Sc'(\Xi^*)$, 
then for all $X_1,X_2\in\Xi$ we have 
$$(\Ac^{\pi^\#,\theta\times\theta}_{\Wig(\phi,\phi)}a)(X_1,X_2)
=(\Op^\theta(a)\phi_{X_1}\mid\phi_{X_1+X_2}) $$
where we denote $\phi_X:=\pi(\exp_M(\theta(X)))\phi\in\Hc_\infty$ 
for each $X\in\Xi$. 
\end{theorem}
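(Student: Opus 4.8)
The plan is to combine the covariance identity of Lemma~\ref{cov} with the basic duality between the localized Weyl calculus and the cross-Wigner distribution. First I would unwind the definition of the ambiguity function from Remark~\ref{ambiguity}(2):
$$(\Ac^{\pi^\#,\theta\times\theta}_{\Wig(\phi,\phi)}a)(X_1,X_2)=(a\mid\pi^\#(\exp_{M\ltimes M}(\theta(X_1),\theta(X_2)))\Wig(\phi,\phi)),$$
so the first task is to identify the symbol $\pi^\#(\exp_{M\ltimes M}(\theta(X_1),\theta(X_2)))\Wig(\phi,\phi)$ explicitly. Here Lemma~\ref{cov} applies: taking $f_1=f_2=\phi$ there and using the parameters $X_1+X_2$ and $X_1$ in place of its $X_1$ and $X_2$, the argument $(\theta(X_2),\theta(X_1-X_2))$ of $\pi^\#$ on its right-hand side becomes $(\theta(X_1),\theta(X_2))$, so that
$$\pi^\#(\exp_{M\ltimes M}(\theta(X_1),\theta(X_2)))\Wig(\phi,\phi)=\Wig(\phi_{X_1+X_2},\phi_{X_1}),\qquad\phi_X:=\pi(\exp_M(\theta(X)))\phi\in\Hc_\infty.$$
Hence the left-hand side of the theorem equals $(a\mid\Wig(\phi_{X_1+X_2},\phi_{X_1}))$.

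The second task is the general identity
$$(a\mid\Wig(g,h))=(\Op^\theta(a)h\mid g)\qquad(g,h\in\Hc_\infty,\ a\in\Sc'(\Xi^*)),$$
where the pairing on the left is the $\Sc'(\Xi^*)\times\Sc(\Xi^*)$ duality extending the scalar product of $L^2(\Xi^*)$ and the one on the right is the $\Hc_{-\infty}\times\Hc_\infty$ duality extending the scalar product of $\Hc$; note that $\Wig(g,h)\in\Sc(\Xi^*)$ since $\Op^\theta(\Wig(g,h))=(\cdot\mid h)g\in\Bc(\Hc)_\infty$, this identification being exactly the $X_1=X_2=0$ instance of the computation carried out in the proof of Lemma~\ref{cov}. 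For $a\in L^2(\Xi^*)$ the identity follows from the unitarity of $\Op^\theta\colon L^2(\Xi^*)\to\Sg_2(\Hc)$: writing the Hilbert--Schmidt inner product as a trace, $(a\mid\Wig(g,h))_{L^2}=\Tr(\Op^\theta(a)\,\Op^\theta(\Wig(g,h))^*)$, and since $\Op^\theta(\Wig(g,h))^*=(\cdot\mid g)h$, the operator under the trace is $(\cdot\mid g)\,\Op^\theta(a)h$, whose trace is $(\Op^\theta(a)h\mid g)$. To pass to arbitrary $a\in\Sc'(\Xi^*)$ I would note that both sides are continuous in $a$ --- the left one by weak-$*$ continuity of the $\Sc'(\Xi^*)\times\Sc(\Xi^*)$ pairing, the right one because $\Op^\theta\colon\Sc'(\Xi^*)\to\Lc(\Hc_\infty,\Hc_{-\infty})$ is a topological isomorphism (Remark~\ref{moyal}) and $T\mapsto(Th\mid g)$ is continuous on $\Lc(\Hc_\infty,\Hc_{-\infty})$ --- and that $\Sc(\Xi^*)\subseteq L^2(\Xi^*)$ is dense in $\Sc'(\Xi^*)$; alternatively one may invoke directly the duality between $\Op^\theta\colon\Sc'(\Xi^*)\to\Lc(\Hc_\infty,\Hc_{-\infty})$ and $\Op^\theta\colon\Sc(\Xi^*)\to\Bc(\Hc)_\infty$ recorded in Remark~\ref{moyal}.

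Substituting $g=\phi_{X_1+X_2}$ and $h=\phi_{X_1}$ into the identity of the second step and combining with the first step gives the claimed formula. The only genuinely delicate point is bookkeeping: one must keep the order of the two arguments of $\Wig$ --- and the complex conjugations hidden in the two duality pairings --- consistent throughout, since a transposition there would produce $(\Op^\theta(a)\phi_{X_1+X_2}\mid\phi_{X_1})$ instead of the asserted expression. The change of variables in applying Lemma~\ref{cov} and the trace computation are otherwise routine, so I expect this convention tracking to be the main obstacle.
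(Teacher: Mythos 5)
Your proof is correct; every step checks out (the substitution $Y_1=X_1+X_2$, $Y_2=X_1$ in Lemma~\ref{cov} does yield $\pi^\#(\exp_{M\ltimes M}(\theta(X_1),\theta(X_2)))\Wig(\phi,\phi)=\Wig(\phi_{X_1+X_2},\phi_{X_1})$, and your trace computation and duality extension of $(a\mid\Wig(g,h))=(\Op^\theta(a)h\mid g)$ are sound, since $\Op^\theta(\Wig(g,h))=(\cdot\mid h)g\in\Bc(\Hc)_\infty$ and $\Op^\theta$ on $\Sc'(\Xi^*)$ is by construction the dual of $\Op^\theta$ on $\Sc(\Xi^*)$). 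The paper reaches the same formula by the same two pillars but organized in the reverse order: it starts from the matrix coefficient $(\Op^\theta(a)\phi_{X_1}\mid\phi_{X_1+X_2})$, uses \eqref{ambiguity_eq1} to rewrite the conjugated operator $\pi(\exp_M(\theta(-X_1-X_2)))\Op^\theta(a)\pi(\exp_M(\theta(X_1)))$ as $\Op^\theta$ of $\pi^\#(\exp_{M\ltimes M}(\theta(-X_1),\theta(-X_2)))a$, pairs this against the fixed rank-one operator $\Op^\theta(\Wig(\phi,\phi))$, transfers the pairing through $\Op^\theta$, and finally moves the group element across by unitarity of $\pi^\#$; in particular it never invokes Lemma~\ref{cov} in this proof, nor does it state the general Weyl--Wigner duality identity you isolate. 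Your route pushes the translations onto the window via Lemma~\ref{cov} instead of onto the symbol via \eqref{ambiguity_eq1}, which buys you a reusable, explicitly stated identity $(a\mid\Wig(g,h))=(\Op^\theta(a)h\mid g)$ (the abstract analogue of the classical relation between the Weyl calculus and the cross-Wigner distribution) at the small cost of an extra $L^2$-to-$\Sc'$ extension argument that the paper's single chain of equalities avoids by working with the operator pairing throughout; your care with the order of the arguments of $\Wig$ and the conjugations in the pairings is indeed exactly where the two versions could diverge, and you have it right.
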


\begin{proof}
First note that 
$$\begin{aligned}
\pi(\exp_M(\theta(-X_1-X_2)))
 \Op^\theta & (a)\pi(\exp_M(\theta(X_1))) \\
&=\Op^\theta(\ee^{-\ie\langle\cdot,X_1+X_2\rangle}\#^\theta a\#^\theta
\ee^{\ie\langle\cdot,X_1\rangle}) \\
&=\Op^\theta(\pi^\#(\exp_{M\ltimes M}(\theta(-X_1),\theta(-X_2)))a)
\end{aligned}$$
by \eqref{ambiguity_eq1}. 
Therefore 
\allowdisplaybreaks
\begin{align}
(\Op^\theta(a)\phi_{X_1}\mid\phi_{X_1+X_2}) 
&=(\pi(\exp_M(\theta(-X_1-X_2)))
 \Op^\theta(a)\pi(\exp_M(\theta(X_1)))\phi\mid\phi) \nonumber\\
&=(\Op^\theta(\pi^\#(\exp_{M\ltimes M}(\theta(-X_1),\theta(-X_2)))a)\phi  
\mid\phi) \nonumber\\
&= (\Op^\theta(\pi^\#(\exp_{M\ltimes M}(\theta(-X_1),\theta(-X_2)))a)\mid 
(\cdot\mid\phi)\phi) \nonumber\\
&= (\Op^\theta(\pi^\#(\exp_{M\ltimes M}(\theta(-X_1),\theta(-X_2)))a)\mid 
\Op^\theta(\Wig(\phi,\phi))) \nonumber\\
&= (\pi^\#(\exp_{M\ltimes M}(\theta(-X_1),\theta(-X_2)))a \mid 
\Wig(\phi,\phi)) \nonumber\\
&= (a \mid 
\pi^\#(\exp_{M\ltimes M}(\theta(X_1),\theta(X_2)))\Wig(\phi,\phi)) \nonumber\\
&= (\Ac^{\pi^\#,\theta\times\theta}_{\Wig(\phi,\phi)}a)(X_1,X_2), \nonumber
\end{align}
and this completes the proof. 
\end{proof}

\begin{corollary}\label{main_cor_gen}
Let $\phi\in\Hc_\infty$ and denote $\phi_X:=\pi(\exp_M(\theta(X)))\phi\in\Hc_\infty$ 
for each $X\in\Xi$. 
Also let $p,q\in[1,\infty]$.  
For $a\in\Sc'(\Xi^*)$ denote by $B_a$ the set of all measurable functions 
$\beta\colon\Xi\to[0,\infty]$ satisfying the condition 
\begin{equation}\label{main_cor_gen_eq1}
(\forall X\in\Xi)\quad \Vert(\Op^\theta(a)\phi_{\bullet}\mid\phi_{\bullet+X})\Vert_{L^p(\Xi)}
\le\beta(X). 
\end{equation}
Also define 
$$\beta_a\colon\Xi\to[0,\infty],\quad 
\beta_a(X)=\Vert(\Op^\theta(a)\phi_{\bullet}\mid\phi_{\bullet+X})\Vert_{L^p(\Xi)}. $$
Then we have 
$a\in M^{p,q}_{\Wig(\phi,\phi)}(\pi^\#,\theta\times\theta)$ 
if and only if $B_a\cap L^q(\Xi)\ne\emptyset$, and in this case 
$\beta_a\in B_a\cap L^q(\Xi)$ and 
\begin{equation}\label{main_cor_gen_eq2}
\Vert a\Vert_{M^{p,q}_{\Wig(\phi,\phi)}(\pi^\#,\theta\times\theta)}
=\inf_{\beta\in B_a\cap L^q(\Xi)}\Vert\beta\Vert_{L^q(\Xi)}
=\Vert\beta_a\Vert_{L^q(\Xi)} . 
\end{equation}
\end{corollary}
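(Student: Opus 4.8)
The plan is to obtain the whole statement by unwinding the definition of the modulation norm from Remark~\ref{ambiguity}(2) with the help of the matrix-coefficient formula in Theorem~\ref{main}. First I would observe that $\Wig(\phi,\phi)$ is an admissible window: since $\phi\in\Hc_\infty$, the rank-one operator $\Op^\theta(\Wig(\phi,\phi))=(\cdot\mid\phi)\phi$ belongs to $\Bc(\Hc)_\infty$, so $\Wig(\phi,\phi)\in\Sc(\Xi^*)$ by the isomorphism $\Op^\theta\colon\Sc(\Xi^*)\to\Bc(\Hc)_\infty$ recalled in Remark~\ref{moyal}. Consequently the ambiguity function $\Ac^{\pi^\#,\theta\times\theta}_{\Wig(\phi,\phi)}a$ is a well-defined (continuous, hence jointly measurable) function on $\Xi\times\Xi$ for every $a\in\Sc'(\Xi^*)$, and by Theorem~\ref{main}
\[
(\Ac^{\pi^\#,\theta\times\theta}_{\Wig(\phi,\phi)}a)(X_1,X_2)=(\Op^\theta(a)\phi_{X_1}\mid\phi_{X_1+X_2}).
\]
Substituting this into the expression for $\Vert a\Vert_{M^{p,q}_{\Wig(\phi,\phi)}(\pi^\#,\theta\times\theta)}$, the inner integral in the variable $X_1$ with $X_2=X$ fixed is exactly $\beta_a(X)^p$ (respectively, the essential supremum defining $\beta_a(X)$ when $p=\infty$); in particular $\beta_a$ is measurable, and integrating the $q$-th power over $X$ gives $\Vert a\Vert_{M^{p,q}_{\Wig(\phi,\phi)}(\pi^\#,\theta\times\theta)}=\Vert\beta_a\Vert_{L^q(\Xi)}$, with the usual conventions when $p$ or $q$ is infinite. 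This is the last equality in~\eqref{main_cor_gen_eq2}.

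It remains to deduce the equivalence and the first equality in~\eqref{main_cor_gen_eq2}. The function $\beta_a$ satisfies~\eqref{main_cor_gen_eq1} trivially (with equality), so $\beta_a\in B_a$ always; combined with the identity just proved, this shows that $a\in M^{p,q}_{\Wig(\phi,\phi)}(\pi^\#,\theta\times\theta)$ is equivalent to $\beta_a\in L^q(\Xi)$, and in that case $\beta_a\in B_a\cap L^q(\Xi)\ne\emptyset$. Conversely, if some $\beta\in B_a\cap L^q(\Xi)$ exists, then $\beta_a\le\beta$ pointwise by the definition of $B_a$, whence $\Vert\beta_a\Vert_{L^q(\Xi)}\le\Vert\beta\Vert_{L^q(\Xi)}<\infty$ by monotonicity of the $L^q$-norm, so $\beta_a\in L^q(\Xi)$ and $a$ lies in the modulation space. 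The same monotonicity inequality, applied to every $\beta\in B_a\cap L^q(\Xi)$ and combined with $\beta_a$ being itself a member of that set, yields $\inf_{\beta\in B_a\cap L^q(\Xi)}\Vert\beta\Vert_{L^q(\Xi)}=\Vert\beta_a\Vert_{L^q(\Xi)}$, finishing the proof.

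I do not anticipate a real obstacle: the mathematical content is concentrated in Theorem~\ref{main}, and the rest is organizing the definitions. The only points that need a moment's attention are the verification that $\Wig(\phi,\phi)$ is a legitimate window vector (so that the spaces $M^{p,q}_{\Wig(\phi,\phi)}(\pi^\#,\theta\times\theta)$ make sense in the framework of Remark~\ref{ambiguity}) and that the map $(X_1,X_2)\mapsto(\Op^\theta(a)\phi_{X_1}\mid\phi_{X_1+X_2})$ is measurable --- both of which follow from the continuity of ambiguity functions built into the setting of Notation~\ref{not} --- together with the bookkeeping of the $p=\infty$ and $q=\infty$ conventions when writing $\beta_a(X)^p$ and $\Vert\beta_a\Vert_{L^q(\Xi)}$.
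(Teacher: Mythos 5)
Your proposal is correct and follows essentially the same route as the paper: both proofs reduce the statement to Theorem~\ref{main}, identifying the inner $L^p$-norm of the ambiguity function $(\Ac^{\pi^\#,\theta\times\theta}_{\Wig(\phi,\phi)}a)(\cdot,X)$ with $\beta_a(X)$ and then handling the set $B_a$ by the pointwise monotonicity $\beta_a\le\beta$. Your extra remarks (admissibility of the window $\Wig(\phi,\phi)$ and measurability of the matrix coefficients) are harmless elaborations of points the paper leaves implicit.
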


\begin{proof} 
If $a\in M^{p,q}_{\Wig(\phi,\phi)}(\pi^\#,\theta\times\theta)$, 
then the function $a_0\colon\Xi\to[0,\infty]$ defined by  $a_0(Y):=\Vert(\Ac^{\pi^\#,\theta\times\theta}_{\Wig(\phi,\phi)}a)(\cdot,Y)\Vert_{L^p(\Xi)}$ has the property $a_0\in L^q(\Xi)$ and moreover it follows at once by Theorem~\ref{main} that 
$\Vert(\Op^\theta(a)\phi_{\bullet}\mid\phi_{\bullet+X_2})\Vert_{L^p(\Xi)}\le a_0(X_2)$ for $X_2\in\Xi$. 
Hence condition~\eqref{main_cor_eq1} is satisfied for $\beta:=a_0$. 

Conversely, if \eqref{main_cor_gen_eq1} holds, then we get by Theorem~\ref{main} again that for all $Y\in\Xi$ we have $a_0(Y)\le\beta(Y)$, 
whence $\Vert a_0\Vert_{L^q(\Xi)}\le \Vert\beta\Vert_{L^q(\Xi)}<\infty$, 
and then $a\in M^{p,q}_{\Wig(\phi,\phi)}(\pi^\#,\theta\times\theta)$ 
and $\Vert a\Vert_{M^{p,q}_{\Wig(\phi,\phi)}(\pi^\#,\theta\times\theta)}\le 
\Vert\beta\Vert_{L^q(\Xi)}$. 

Equality~\eqref{main_cor_gen_eq2} is a by-product of the above reasoning, 
hence the proof is complete. 
\end{proof}

\begin{remark}\label{main_cor}
\normalfont 
By using Corollary~\ref{main_cor_gen} for $p=\infty$ we get the following abstract version of 
the \emph{almost diagonalization theorem} established in \cite[Th.~3.2]{Gr06}: 

Let $\phi\in\Hc_\infty$ and denote as above $\phi_X:=\pi(\exp_M(\theta(X)))\phi\in\Hc_\infty$ 
for each $X\in\Xi$. 
For $a\in\Sc'(\Xi^*)$ let $B_a$ be the set of all measurable functions 
$\beta\colon\Xi\to[0,\infty]$ satisfying the condition 
\begin{equation}\label{main_cor_eq1}
(\forall X_1,X_2\in\Xi)\quad \vert(\Op^\theta(a)\phi_{X_1}\mid\phi_{X_2})\vert\le\beta(X_1-X_2). 
\end{equation}
Also define 
$$\beta_a\colon\Xi\to[0,\infty],\quad 
\beta_a(X)=\sup_{Y\in\Xi}\vert(\Op^\theta(a)\phi_{X+Y}\mid\phi_X)\vert. $$
Then we have 
$a\in M^{\infty,q}_{\Wig(\phi,\phi)}(\pi^\#,\theta\times\theta)$ 
if and only if $B_a\cap L^q(\Xi)\ne\emptyset$, and in this case 
$\beta_a\in B_a\cap L^q(\Xi)$ and 
\begin{equation}\label{main_cor_eq2}
\Vert a\Vert_{M^{\infty,q}_{\Wig(\phi,\phi)}(\pi^\#,\theta\times\theta)}
=\inf_{\beta\in B_a\cap L^q(\Xi)}\Vert\beta\Vert_{L^p(\Xi)}
=\Vert\beta_a\Vert_{L^q(\Xi)} 
\end{equation}
whenever $1\le q\le\infty$.
\qed 
\end{remark}

\begin{definition}\label{ker1}
\normalfont
Let $\phi\in\Hc_\infty$ with $\Vert\phi\Vert=1$ and $\phi_X=\pi(\exp_M(\theta(X)))\phi\in\Hc_\infty$ 
for each $X\in\Xi$. 
For every $a\in\Sc'(\Xi^*)$ we define 
$$C_a\colon\Xi\times\Xi\to\CC,\quad C_a(X,Y):=(\Op^\theta(a)\phi_X\mid\phi_Y)$$
and the integral operator in $L^2(\Xi)$ defined by the integral kernel $C_a$ will be denoted by $$T_a\colon\Dc(T_a)\to L^2(\Xi).$$
Let us also denote by $V:=\Ac_\phi\colon\Hc\to L^2(\Xi)$ the isometry defined by the ambiguity functions. 
Note that the constant function $1\in\Sc'(\Xi^*)$ gives rise to the orthogonal projection 
$T_1=T_1^*=(T_1)^2\in\Bc(L^2(\Xi))$ with $\Ran T_1=\Ran V$ and $T_1T_a=T_aT_1=T_a$ for every $a\in\Sc'(\Xi)$. 
\qed 
\end{definition}

We now present the main idea which allows to use integral operators on $\Xi$ for the study of operators  $\Op^\theta (a)\colon\Hc_\infty\to\Hc_{-\infty}$,  $a\in \Sc'(\Xi^*)$. 

\begin{lemma}\label{ker2}
For arbitrary $a\in\Sc'(\Xi)$ we have 
$$\Op^\theta(a)= V^* T_a V\colon\Dc(\Op^\theta(a))\to\Hc$$
on the domain  
$$\Dc(\Op^\theta(a))=\{f\in\Hc\mid Vf\in\Dc(T_a)\}.$$
In particular, $T_a\in\Bc(L^2(\Xi))$ if and only if $\Op^\theta(a)\in\Bc(\Hc)$.
\end{lemma}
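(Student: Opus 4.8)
The plan is to exploit the identity from Theorem~\ref{main} and the isometry $V=\Ac_\phi$ to factor $\Op^\theta(a)$ through the integral operator $T_a$. First I would recall that $V\colon\Hc\to L^2(\Xi)$ is an isometry whose range equals $\Ran T_1$, so that $V^*V=\id_\Hc$ and $VV^*=T_1$, the orthogonal projection onto $\Ran V$. The basic computation is to unwind the definition of $V$: for $f,g\in\Hc_\infty$, we have $(Vf)(X)=(\Ac^{\pi,\theta}_\phi f)(X)=(f\mid\phi_X)$, hence $(Vf\mid Vg)_{L^2(\Xi)}=\int_\Xi (f\mid\phi_X)\overline{(g\mid\phi_X)}\,\de X$, and expanding $(T_a Vf\mid Vg)_{L^2(\Xi)}=\int_\Xi\int_\Xi C_a(X,Y)(Vf)(X)\overline{(Vg)(Y)}\,\de X\,\de Y$ using $C_a(X,Y)=(\Op^\theta(a)\phi_X\mid\phi_Y)$. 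The goal is to identify this with $(\Op^\theta(a)f\mid g)$ for $f,g$ ranging over a dense set.

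The key identity to establish is the reproducing-type formula coming from the orthogonality relations along $\theta$ (which is part of Setting~\ref{loc0}): for $\psi\in\Hc_\infty$ one has $\psi=\int_\Xi (\psi\mid\phi_X)\phi_X\,\de X$ in a suitable weak sense (this is precisely the statement that $V^*V=\id$, i.e. $\int_\Xi(\cdot\mid\phi_X)\phi_X\,\de X=\id_\Hc$, after normalizing $\Vert\phi\Vert=1$). Granting this, I would compute, for $f,g\in\Hc_\infty$,
\begin{align*}
(T_a Vf\mid Vg)_{L^2(\Xi)}
&=\int_\Xi\Bigl(\int_\Xi(\Op^\theta(a)\phi_X\mid\phi_Y)(f\mid\phi_X)\,\de X\Bigr)\overline{(g\mid\phi_Y)}\,\de Y\\
&=\int_\Xi\Bigl(\Op^\theta(a)\int_\Xi(f\mid\phi_X)\phi_X\,\de X\;\Big|\;\phi_Y\Bigr)\overline{(g\mid\phi_Y)}\,\de Y\\
&=\int_\Xi(\Op^\theta(a)f\mid\phi_Y)\overline{(g\mid\phi_Y)}\,\de Y
=(\Op^\theta(a)f\mid g),
\end{align*}
where the last step again uses $V^*V=\id$ applied to $g$ (and the fact that $\Op^\theta(a)f\in\Hc_{-\infty}$ can be paired against $\Hc_\infty$-vectors, with the reproducing formula extending by duality). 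This gives $V^*T_aVf=\Op^\theta(a)f$ on $\Hc_\infty$. Then I would argue that $\Hc_\infty$ is a core for both sides and that the stated domain description $\Dc(\Op^\theta(a))=\{f\in\Hc\mid Vf\in\Dc(T_a)\}$ follows from $V$ being an isometry onto $\Ran T_1$ together with $T_1T_a=T_aT_1=T_a$ from Definition~\ref{ker1} (so that $T_a$ is ``supported'' on $\Ran V$ and $V^*$ inverts $V$ there).

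The final boundedness equivalence is then formal: if $T_a\in\Bc(L^2(\Xi))$ then $V^*T_aV\in\Bc(\Hc)$ with norm at most $\Vert T_a\Vert$; conversely, if $\Op^\theta(a)\in\Bc(\Hc)$, then since $V V^*=T_1$ and $T_aT_1=T_1T_a=T_a$, we recover $T_a=T_1T_aT_1=VV^*T_aVV^*=V\Op^\theta(a)V^*\in\Bc(L^2(\Xi))$, again with matching norm. The main obstacle I anticipate is the careful justification of interchanging the order of integration and of pulling $\Op^\theta(a)$ inside the integral when $a\in\Sc'(\Xi^*)$ is merely a distribution: one should first verify the identity on the dense subspace where $a\in\Sc(\Xi^*)$ (so that $\Op^\theta(a)\in\Lc(\Hc_{-\infty},\Hc_\infty)$ by Remark~\ref{moyal} and all integrals converge absolutely against smooth vectors), and then extend to general $a$ by the continuity/duality of $\Op^\theta\colon\Sc'(\Xi^*)\to\Lc(\Hc_\infty,\Hc_{-\infty})$ together with the polynomial-growth bounds on $\Vert\de\pi(u)\phi_X\Vert$ guaranteed by Setting~\ref{loc0}(4)(d), which ensure the relevant pairings are well defined and the manipulations legitimate.
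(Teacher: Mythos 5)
Your proposal is correct and follows essentially the same route as the paper: the paper's one-line proof just invokes Definition~\ref{ker1} together with the orthogonality relations (phrased for $\pi\otimes\bar\pi$ along $\theta\times\theta$), which is precisely the content of your computation using $V^*V=\id_{\Hc}$, $VV^*=T_1$ and the reproducing formula $f=\int_\Xi(f\mid\phi_X)\phi_X\,\de X$ applied to both arguments. The extra care you take with weak integrals, domains, and the density argument in $a$ merely spells out what the paper leaves implicit.
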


\begin{proof}
Use Definition~\ref{ker1} along with the fact that the representation 
$\pi\otimes\bar\pi\colon M\times M\to\Bc(\Sg_2(\Hc))$ satisfies the orthogonality relations 
along the linear mapping $\theta\times\theta\colon\Xi\times\Xi\to\mg\times\mg$ 
(see \cite[Lemma 3.8(1)]{BB10d}). 
\end{proof}

\begin{remark}\label{ker3}
\normalfont
If $a_1,a_2\in\Sc'(\Xi)$ and the operator product $T_{a_1}T_{a_2}$ is well defined in $L^2(\Xi)$,  so that $\Op^\theta(a_1)\Op^\theta(a_2)=V^*T_{a_1}T_{a_2}V\in\Lc(\Hc_\infty,\Hc_{-\infty})$ is well defined, then the Moyal product $a_1\#^\theta a_2\in\Sc'(\Xi^*)$ makes sense and we have 
\begin{equation}\label{ker_eq1}
C_{a_1\#^\theta a_2}(X,Z)=\int\limits_{\Xi}C_{a_1}(X,Y)C_{a_2}(Y,Z)\de Y
\end{equation}
for all $X,Z\in\Xi$. 
In fact, it follows by \cite[Lemma~3.19(4)]{BB10d} that for $X\in\Xi$ we have the integral 
$\Op^\theta(a_2)\phi_X
=\int\limits_\Xi(\Op^\theta(a_2)\phi_X\mid\phi_Y)\phi_Y\de Y$ 
convergent in $\Hc_{-\infty}$. 
Therefore, if $\Op^\theta(a_2)\phi_X\in\Dc(\Op^\theta(a_1))$, then 
$$\Op^\theta(a_1)\Op^\theta(a_2)\phi_X
=\int\limits_\Xi(\Op^\theta(a_2)\phi_X\mid\phi_Y)\Op^\theta(a_1)\phi_Y\de Y.$$ 
Hence we have 
$$(\Op^\theta(a_1)\Op^\theta(a_2)\phi_X\mid\phi_Z)
=\int\limits_\Xi(\Op^\theta(a_2)\phi_X\mid\phi_Y)(\Op^\theta(a_1)\phi_Y\mid\phi_Z)\de Y $$
for all $X,Z\in\Xi$.
\qed
\end{remark}

The next result is a generalization of the version of \cite[Prop.~0.1]{HTW07} without weights, 
which is recovered in the special case when $\pi$ is the Schr\"odinger representation of the Heisenberg group. 

\begin{corollary}\label{main_cor_mult}
Let $\phi\in\Hc_\infty$ and $p_1,p_2,p,q_1,q_2,q\in[1,\infty]$ 
such that 
$\frac{1}{p_1}+\frac{1}{p_2}=\frac{1}{p}$ and $\frac{1}{q_1}+\frac{1}{q_2}=1+\frac{1}{q}$. 
Then the Moyal product $\#^\theta$ defines a continuous bilinear map 
$$ M^{p_1,q_1}_{\Wig(\phi,\phi)}(\pi^\#,\theta\times\theta)
\times M^{p_2,q_2}_{\Wig(\phi,\phi)}(\pi^\#,\theta\times\theta)
\to M^{p,q}_{\Wig(\phi,\phi)}(\pi^\#,\theta\times\theta).$$
\end{corollary}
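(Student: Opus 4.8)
The plan is to reduce the Moyal‐product estimate to a purely integral‐operator statement on $\Xi$ via the kernel description $C_a(X,Y)=(\Op^\theta(a)\phi_X\mid\phi_Y)$ and then invoke a Schur‐type estimate. First I would recall from Corollary~\ref{main_cor_gen} that, with $p_i$ as the ``inner'' exponents, membership $a_i\in M^{p_i,q_i}_{\Wig(\phi,\phi)}(\pi^\#,\theta\times\theta)$ is equivalent to the finiteness in $L^{q_i}(\Xi)$ of the function
$$\beta_{a_i}(X)=\Vert(\Op^\theta(a_i)\phi_{\bullet}\mid\phi_{\bullet+X})\Vert_{L^{p_i}(\Xi)}
=\Vert C_{a_i}(\bullet,\bullet+X)\Vert_{L^{p_i}(\Xi)},$$
and that $\Vert a_i\Vert_{M^{p_i,q_i}}=\Vert\beta_{a_i}\Vert_{L^{q_i}(\Xi)}$. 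So the statement to prove is: the convolution‐type kernel $C_{a_1\#^\theta a_2}(X,Z)=\int_\Xi C_{a_1}(X,Y)C_{a_2}(Y,Z)\,\de Y$ (Remark~\ref{ker3}, equation~\eqref{ker_eq1}) satisfies $\Vert\beta_{a_1\#^\theta a_2}\Vert_{L^q(\Xi)}\lesssim\Vert\beta_{a_1}\Vert_{L^{q_1}(\Xi)}\Vert\beta_{a_2}\Vert_{L^{q_2}(\Xi)}$, with $\frac1{p_1}+\frac1{p_2}=\frac1p$ and $\frac1{q_1}+\frac1{q_2}=1+\frac1q$.

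The key steps are as follows. Fix $X\in\Xi$; then
$$\beta_{a_1\#^\theta a_2}(X)=\Bigl\Vert\int_\Xi C_{a_1}(\bullet,Y)C_{a_2}(Y,\bullet+X)\,\de Y\Bigr\Vert_{L^p_{\bullet}(\Xi)}.$$
Writing $Y=W+U$ relative to the variable $W$ of the $L^p$‐norm (i.e. substituting so that the first factor becomes $C_{a_1}(W,W+U)$), we obtain a bound, via Minkowski's integral inequality, of the form
$$\beta_{a_1\#^\theta a_2}(X)\le\int_\Xi\bigl\Vert C_{a_1}(W,W+U)\,C_{a_2}(W+U,W+X)\bigr\Vert_{L^p_W(\Xi)}\,\de U.$$
Apply Hölder in $W$ with exponents $p_1,p_2$ (legitimate since $\frac1{p_1}+\frac1{p_2}=\frac1p$): the first factor contributes $\beta_{a_1}(U)$ and the second contributes $\beta_{a_2}(X-U)$ after the obvious change of the inner variable. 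This yields the pointwise bound $\beta_{a_1\#^\theta a_2}\le\beta_{a_1}*\beta_{a_2}$. Finally, Young's convolution inequality with $\frac1{q_1}+\frac1{q_2}=1+\frac1q$ gives
$$\Vert\beta_{a_1\#^\theta a_2}\Vert_{L^q(\Xi)}\le\Vert\beta_{a_1}\Vert_{L^{q_1}(\Xi)}\Vert\beta_{a_2}\Vert_{L^{q_2}(\Xi)},$$
which is exactly the asserted continuity. Of course one must first check that the Moyal product is well defined, i.e. that the operator product $T_{a_1}T_{a_2}$ makes sense on a suitable dense domain; this follows because the finiteness of $\beta_{a_1},\beta_{a_2}$ together with Lemma~\ref{ker2} and Remark~\ref{ker3} already guarantees that $\Op^\theta(a_1)\Op^\theta(a_2)\in\Lc(\Hc_\infty,\Hc_{-\infty})$ is defined and has kernel~\eqref{ker_eq1}, so that $a_1\#^\theta a_2\in\Sc'(\Xi^*)$ is meaningful and the estimate above shows it lies in the target modulation space.

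The main obstacle I anticipate is bookkeeping with the translations on $\Xi$: the kernels $C_{a_i}(X,Y)$ depend on $X-Y$ only through the ambiguity‐function structure of Theorem~\ref{main} (where $(\Ac^{\pi^\#,\theta\times\theta}_{\Wig(\phi,\phi)}a)(X_1,X_2)=(\Op^\theta(a)\phi_{X_1}\mid\phi_{X_1+X_2})$), so the change of variables turning the iterated integral into a genuine convolution in the ``difference'' variable must be justified carefully, including Fubini/Tonelli for the a priori only $\Sc'$‐valued objects. A secondary technical point is ensuring that the pointwise inequality $\beta_{a_1\#^\theta a_2}\le\beta_{a_1}*\beta_{a_2}$ is valid even before one knows the left side is finite — this is handled by working with the nonnegative measurable functions throughout and invoking Tonelli, so no integrability needs to be assumed in advance. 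Once these measure‐theoretic points are dispatched, the three inequalities (Minkowski, Hölder, Young) assemble into the result mechanically.
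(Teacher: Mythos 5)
Your argument is correct and follows essentially the same route as the paper's own proof: reduce to the kernels $C_{a_j}$ via Corollary~\ref{main_cor_gen}, use the composition formula \eqref{ker_eq1} from Remark~\ref{ker3}, and then apply Minkowski's integral inequality, H\"older's inequality in the translated variable (giving the pointwise bound by the convolution $\beta_{a_1}\ast\beta_{a_2}$), and Young's inequality with $\frac{1}{q_1}+\frac{1}{q_2}=1+\frac{1}{q}$. Your additional remarks on well-definedness of the Moyal product and on Tonelli for the nonnegative majorants are consistent with, and slightly more explicit than, the paper's treatment.
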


\begin{proof}  
For $j=1,2$ let $a_j\in M^{p_j,q_j}_{\Wig(\phi,\phi)}(\pi^\#,\theta\times\theta)$. 
We shall use the notation of Definition~\ref{ker1}.

Then there exists $\beta_j\in L^{q_j}(\Xi)$ such that 
$\Vert C_{a_j}(\cdot,\cdot+Y)\Vert_{L^{p_j}(\Xi)}\le \beta_j(Y)$ for every $Y\in\Xi$. 
On the other hand, it follows by \eqref{ker_eq1} that for $X_1,X_2\in\Xi$ we have 
$$\begin{aligned}
C_{a_1\#^\theta a_2}(X_1,X_1+X_2)
& =\int\limits_{\Xi}C_{a_1}(X_1,Y)C_{a_2}(Y,X_1+X_2)\de Y \\
& =\int\limits_{\Xi}C_{a_1}(X_1,X_1+Y)C_{a_2}(X_1+Y,X_1+X_2)\de Y
\end{aligned}$$
hence by Minkowski's inequality, and then H\"older's inequality, we get 
\allowdisplaybreaks
\begin{align}
\Vert C_{a_1\#^\theta a_2}(\cdot,\cdot+X_2)\Vert_{L^p(\Xi)}
& \le \int\limits_\Xi\Vert C_{a_1}(\cdot,\cdot+Y)C_{a_2}(\cdot+Y,\cdot+X_2)\Vert_{L^p(\Xi)}\de Y \nonumber\\
& \le \int\limits_\Xi\Vert C_{a_1}(\cdot,\cdot+Y)\Vert_{L^{p_1}(\Xi)} 
\Vert C_{a_2}(\cdot+Y,\cdot+X_2)\Vert_{L^{p_2}(\Xi)}\de Y \nonumber\\
& \le \int\limits_\Xi\Vert C_{a_1}(\cdot,\cdot+Y)\Vert_{L^{p_1}(\Xi)} 
\Vert C_{a_2}(\cdot,\cdot+X_2-Y)\Vert_{L^{p_2}(\Xi)}\de Y \nonumber\\
& \le \int\limits_\Xi\beta_1(Y)\beta_2(X_2-Y)\de Y \nonumber\\
&=:\beta (X_2) \nonumber
\end{align}
Since $\beta_j\in L^{q_j}(\Xi)$ for $j=1,2$, we have 
$\beta\in L^q(\Xi)$ and 
$$\Vert a_1\#^\theta a_2\Vert_{M^{p,q}_{\Wig(\phi,\phi)}(\pi^\#,\theta\times\theta)}
\le\Vert\beta\Vert_{L^q(\Xi)}\le 
\Vert\beta_1\Vert_{L^{q_1}(\Xi)}\Vert\beta_2\Vert_{L^{q_2}(\Xi)}.$$ 
By using Corollary~\ref{main_cor_gen} , 
it then follows that $a_1\#^\theta a_2\in M^{p,q}_{\Wig(\phi,\phi)}(\pi^\#,\theta\times\theta)$ 
and 
$$\Vert a_1\#^\theta a_2\Vert_{M^{p,q}_{\Wig(\phi,\phi)}(\pi^\#,\theta\times\theta)}\le 
\Vert a_1\Vert_{M^{p_1,q_1}_{\Wig(\phi,\phi)}(\pi^\#,\theta\times\theta)}
\Vert a_2\Vert_{M^{p_2,q_2}_{\Wig(\phi,\phi)}(\pi^\#,\theta\times\theta)},$$ 
which ends the proof. 
\end{proof}

\subsection*{The abstract version of Sj\"ostrand's algebra}

We can now prove the main result of the paper. 

\begin{theorem}\label{th_sj}
If $\phi\in\Hc_\infty$, then the following assertions hold: 
\begin{enumerate}
\item\label {th_sj_item1} 
For every $a\in M^{\infty,1}_{\Wig(\phi,\phi)}(\pi^\#,\theta\times\theta)$ 
we have $\Op^\theta(a)\in\Bc(\Hc)$ and moreover 
$\Vert\Op^\theta(a)\Vert\le 
\Vert a\Vert_{M^{\infty,1}_{\Wig(\phi,\phi)}(\pi^\#,\theta\times\theta)}$.
\item\label  {th_sj_item2} 
The Moyal product $\#^\theta$ makes the modulation space 
$M^{\infty,1}_{\Wig(\phi,\phi)}(\pi^\#,\theta\times\theta)$ 
into an involutive associative Banach algebra. 
\item\label{th_sj_item3} 
Let 
\begin{equation}\label{unit} 
\Mc^{\infty,1}_{\Wig(\phi,\phi)}(\pi^\#,\theta\times\theta)
=\CC 1+M^{\infty,1}_{\Wig(\phi,\phi)}(\pi^\#,\theta\times\theta).
\end{equation}
If $a_0\in \Mc^{\infty,1}_{\Wig(\phi,\phi)}(\pi^\#,\theta\times\theta)$ and the operator 
$\Op^\theta(a_0)$ is invertible in $\Bc(\Hc)$, 
then there exists $b_0\in \Mc^{\infty,1}_{\Wig(\phi,\phi)}(\pi^\#,\theta\times\theta)$ 
such that $\Op^\theta(a_0)^{-1}=\Op^\theta(b_0)$. 
\end{enumerate} 
\end{theorem}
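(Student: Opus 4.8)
The plan is to prove the three assertions in order, using the integral-operator picture of Definition~\ref{ker1} and Lemma~\ref{ker2} as the principal tool, together with the multiplication estimates of Corollary~\ref{main_cor_mult}.

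For \eqref{th_sj_item1}, I would first normalize $\Vert\phi\Vert=1$, which is harmless since the modulation-space norms associated with different nonzero windows in $\Hc_\infty$ are equivalent; then by Lemma~\ref{ker2} it suffices to bound the integral operator $T_a$ on $L^2(\Xi)$. By Remark~\ref{main_cor} with $p=\infty$, the hypothesis $a\in M^{\infty,1}_{\Wig(\phi,\phi)}(\pi^\#,\theta\times\theta)$ says precisely that the kernel $C_a(X,Y)=(\Op^\theta(a)\phi_X\mid\phi_Y)$ satisfies $|C_a(X,Y)|\le\beta_a(X-Y)$ with $\beta_a\in L^1(\Xi)$ and $\Vert\beta_a\Vert_{L^1(\Xi)}=\Vert a\Vert_{M^{\infty,1}_{\Wig(\phi,\phi)}(\pi^\#,\theta\times\theta)}$. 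Thus $|C_a|$ is dominated by the convolution kernel $(X,Y)\mapsto\beta_a(X-Y)$, and Schur's test (or Young's inequality) gives $\Vert T_a\Vert_{\Bc(L^2(\Xi))}\le\Vert\beta_a\Vert_{L^1(\Xi)}$; combined with $\Op^\theta(a)=V^*T_aV$ and $\Vert V\Vert\le 1$ this yields the stated operator-norm bound.

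For \eqref{th_sj_item2}, associativity of $\#^\theta$ is immediate from $\Op^\theta(a\#^\theta b)=\Op^\theta(a)\Op^\theta(b)$ and associativity of operator composition, once closedness of $M^{\infty,1}_{\Wig(\phi,\phi)}(\pi^\#,\theta\times\theta)$ under $\#^\theta$ is known; and the latter, together with the submultiplicativity $\Vert a\#^\theta b\Vert\le\Vert a\Vert\,\Vert b\Vert$, is exactly Corollary~\ref{main_cor_mult} applied with $p_1=p_2=p=\infty$ and $q_1=q_2=q=1$ (note $\frac1{q_1}+\frac1{q_2}=2=1+\frac1q$). Completeness of the normed space is routine (it is defined by an $L^{\infty,1}$-type norm of the ambiguity function, hence inherits completeness). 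For the involution I would take complex conjugation $a\mapsto\bar a$ on $\Sc'(\Xi^*)$, which corresponds to the operator adjoint, $\Op^\theta(\bar a)=\Op^\theta(a)^*$, and check that it preserves the $M^{\infty,1}$-norm: conjugating $a$ replaces $C_a(X,Y)$ by $\overline{C_a(Y,X)}$, so $|C_{\bar a}(X,Y)|=|C_a(Y,X)|\le\beta_a(Y-X)$, and $X\mapsto\beta_a(-X)$ again lies in $L^1(\Xi)$ with the same norm; this gives an isometric, conjugate-linear, anti-multiplicative involution.

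Assertion \eqref{th_sj_item3} is the Wiener-type inverse-closedness statement and is the main obstacle. The strategy is the standard spectral-invariance argument: $\Mc:=\CC 1+M^{\infty,1}_{\Wig(\phi,\phi)}(\pi^\#,\theta\times\theta)$ is a unital Banach $*$-subalgebra of $\Bc(\Hc)$ via $\Op^\theta$ (using \eqref{th_sj_item1} and \eqref{th_sj_item2}, and that $\Op^\theta(1)=\id_\Hc$), and one must show it is inverse-closed, i.e. if $a_0\in\Mc$ and $\Op^\theta(a_0)$ is invertible in $\Bc(\Hc)$ then $\Op^\theta(a_0)^{-1}=\Op^\theta(b_0)$ for some $b_0\in\Mc$. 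By passing to $\Op^\theta(a_0)^*\Op^\theta(a_0)$ (which lies in $\Mc$, is positive and invertible in $\Bc(\Hc)$), it suffices to treat self-adjoint elements, and then it is enough to show that $\Mc$ is spectrally invariant, for which the cleanest route is to verify that $\Mc$ is a \emph{symmetric} Banach $*$-algebra (every self-adjoint element has real spectrum in $\Mc$) together with the Hulanicki-type lemma: a symmetric unital Banach $*$-subalgebra $\Mc$ of a $C^*$-algebra $\Bc(\Hc)$, closed under holomorphic functional calculus of its own, is inverse-closed. Concretely I would follow Gröchenig's approach from \cite{Gr06}: reduce to showing that for self-adjoint $a_0$ the resolvent $(\lambda 1-a_0)^{-1}$, which exists in $\Bc(\Hc)$ for $\lambda\notin\RR$, actually has $M^{\infty,1}$-symbol with norm controlled along the real axis, by using the differential/commutator structure — specifically that $M^{\infty,1}_{\Wig(\phi,\phi)}(\pi^\#,\theta\times\theta)$ is stable under the one-parameter groups $\pi^\#(\exp_{M\ltimes M}(\theta(X_1),\theta(X_2)))$ via \eqref{ambiguity_eq1} and Lemma~\ref{cov}, and that the almost-diagonalization of Remark~\ref{main_cor} persists under inversion because the class of convolution-dominated matrices on $\Xi$ (a finite-dimensional vector space, hence with polynomial growth/amenability of $\Xi\simeq\RR^d$) is itself inverse-closed in $\Bc(L^2(\Xi))$ — this last fact is the Gröchenig--Leinert / Baskakov theorem on convolution-dominated operators, which applies because $\Xi$ is an abelian (in particular amenable) group with a sub-multiplicative weight equal to $1$. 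Thus the real work is: (i) transfer the problem via $T_a$ to convolution-dominated kernels on $\Xi$; (ii) invoke inverse-closedness of convolution-dominated operators on $L^2(\Xi)$; (iii) translate back, checking that the resulting kernel is of the form $C_{b_0}$ for a genuine $b_0\in\Sc'(\Xi^*)$, which is where one uses that $T_1$ is the projection onto $\Ran V$ and $T_1T_bT_1=T_b$, so that the inverse kernel, compressed by $T_1$, still defines an element of the algebra. I expect step (iii) — showing the compressed inverse kernel really comes from a symbol $b_0$ lying in $\Mc$ rather than merely defining a bounded operator commuting with the projection — to require the most care, and it is handled exactly by the characterization in Corollary~\ref{main_cor_gen}/Remark~\ref{main_cor} together with Lemma~\ref{ker2}.
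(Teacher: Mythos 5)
Your treatment of assertions \eqref{th_sj_item1} and \eqref{th_sj_item2} is essentially the argument of the paper: the $L^1$-dominated kernel $|C_a(X,Y)|\le\beta_a(X-Y)$ from Remark~\ref{main_cor} plus Schur/Young gives the bound $\Vert\Op^\theta(a)\Vert\le\Vert\beta_a\Vert_{L^1(\Xi)}$ (the paper phrases it through the reproducing formula $f=\int_\Xi(f\mid\phi_X)\phi_X\,\de X$ rather than through $V^*T_aV$, but it is the same estimate; note that both versions implicitly use $\Vert\phi\Vert=1$, whereas your appeal to ``equivalence of norms for different windows'' is not something established in this abstract setting), and algebra closedness with submultiplicativity is Corollary~\ref{main_cor_mult} with $p_1=p_2=\infty$, $q_1=q_2=q=1$. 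Your verification of the involution is correct and is in fact more explicit than the paper's. The one soft spot is completeness: it is not ``inherited'' merely because the norm is an $L^{\infty,1}$-norm of an ambiguity function; one must show that the limit object is again of the form $\Ac^{\pi^\#,\theta\times\theta}_{\Wig(\phi,\phi)}a$ for some $a\in\Sc'(\Xi^*)$. The paper does this by combining assertion \eqref{th_sj_item1} (so $\Op^\theta(a_k)$ converges in $\Bc(\Hc)$ to some $T$), the bijectivity of $\Op^\theta\colon\Sc'(\Xi^*)\to\Lc(\Hc_\infty,\Hc_{-\infty})$ to write $T=\Op^\theta(a)$, and then Remark~\ref{main_cor} again; some such argument is genuinely needed.

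The genuine gap is in assertion \eqref{th_sj_item3}. Your steps (i)--(iii) assume that invertibility of $\Op^\theta(a_0)$ in $\Bc(\Hc)$ can be fed directly into a Wiener-type theorem for convolution-dominated operators on $L^2(\Xi)$. But every $T_a$ vanishes on $(\Ran T_1)^\perp=(\Ran V)^\perp$, so $T_{a_0}$ is never invertible on $L^2(\Xi)$ (unless $V$ is onto); one must first extend by a nonzero scalar on $(\Ran V)^\perp$. Writing $a_0=\alpha+a_{00}$ with $a_{00}\in M^{\infty,1}_{\Wig(\phi,\phi)}(\pi^\#,\theta\times\theta)$, the operator one can treat is $(z-\alpha)\1-T_{a_{00}}$, i.e.\ ``scalar plus convolution-dominated kernel'' -- this is exactly the unitization covered by Kurbatov's theorem, and it cannot be replaced by a kernel statement about $z\1-T_{a_0}$, because in this abstract setting it is not known that $1$ itself has a convolution-dominated kernel $(\phi_X\mid\phi_Y)$ (indeed the paper observes that $\Mc^{\infty,1}=M^{\infty,1}$ only in the Schr\"odinger case). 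Consequently the kernel-transfer argument only inverts $z-a_0$ inside $\Mc^{\infty,1}_{\Wig(\phi,\phi)}(\pi^\#,\theta\times\theta)$ for $z\ne\alpha$ in the resolvent set, and when $\alpha=0$ (i.e.\ $a_0\in M^{\infty,1}$ proper) this never reaches $\Op^\theta(a_0)^{-1}$ itself. The paper closes this gap with a step absent from your proposal: $z\mapsto b_z$ is holomorphic into the Banach algebra, and the Riesz--Dunford integral $\Op^\theta(a_0)^{-1}=\frac{1}{2\pi\ie}\int_\gamma\frac{1}{z}(z-\Op^\theta(a_0))^{-1}\,\de z$ over a contour avoiding $\alpha$ produces $b_0=\frac{1}{2\pi\ie}\int_\gamma\frac{1}{z}\bigl((z-\alpha)^{-1}-b_z\bigr)\,\de z\in\Mc^{\infty,1}_{\Wig(\phi,\phi)}(\pi^\#,\theta\times\theta)$ with $\Op^\theta(b_0)=\Op^\theta(a_0)^{-1}$. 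Your preliminary reduction to self-adjoint elements via $\Op^\theta(a_0)^*\Op^\theta(a_0)$ and the appeal to symmetry plus a Hulanicki-type lemma (neither established in this framework) does not remove the difficulty: if $a_0$ has zero $\CC 1$-component, so does $a_0^{*}\#^\theta a_0$, and the same obstruction reappears. With that detour dropped and the resolvent-plus-contour-integral step added, your kernel-transfer plan coincides with the paper's proof.
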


\begin{proof}
To prove Assertion~\eqref {th_sj_item1}, 
let $a\in M^{\infty,1}_{\Wig(\phi,\phi)}(\pi^\#,\theta\times\theta)$ arbitrary 
and denote $\phi_X:=\pi(\exp_M(\theta(X)))\phi\in\Hc_\infty$ 
for each $X\in\Xi$. 
It follows by Corollary~\ref{main_cor} that there exists $\beta_a\in L^1(\Xi)$ 
such that 
$\Vert a\Vert_{M^{\infty,1}_{\Wig(\phi,\phi)}(\pi^\#,\theta\times\theta)}
=\Vert\beta_a\Vert_{L^1(\Xi)}$ and 
$$
(\forall X_1,X_2\in\Xi)\quad \vert(\Op^\theta(a)\phi_{X_1}\mid\phi_{X_2})\vert\le\beta_a(X_1-X_2). 
$$ 
Now let $f\in\Hc_\infty$ and recall from \cite[Lemma~3.19]{BB10d} that 
$f=\int\limits_\Xi(f\mid\phi_X)\phi_X\de X$, 
hence 
$$\begin{aligned}
\vert(\Op^\theta(a)f\mid\phi_Y)\vert
&\le\int\limits_\Xi\vert(f\mid\phi_X)\vert\cdot
\vert(\Op^\theta(a)\phi_X\mid\phi_Y)\vert\de X \\
&\le \int\limits_\Xi\vert(f\mid\phi_X)\vert\cdot
\beta_a(X-Y)\de X.
\end{aligned}$$
That is, $\vert(\Ac_\phi^{\pi,\theta}(\Op^\theta(a)f))(Y)
\le(\vert\Ac_\phi^{\pi,\theta}f\vert\ast\beta_a(-\cdot))(Y)$ for all 
$Y\in\Xi$. 
Therefore, 
$$\begin{aligned}
\Vert\Op^\theta(a)f\Vert
&=\Vert(\Ac_\phi^{\pi,\theta}(\Op^\theta(a)f))\Vert_{L^2(\Xi)}
\le\Vert \Ac_\phi^{\pi,\theta}f\Vert_{L^2(\Xi)} \Vert\beta_a\Vert_{L^1(\Xi)} \\
&=\Vert f\Vert\cdot\Vert a\Vert_{M^{\infty,1}_{\Wig(\phi,\phi)}(\pi^\#,\theta\times\theta)}. 
\end{aligned}$$
Since $f\in\Hc_\infty$ is arbitrary and $\Hc_\infty$ is dense in $\Hc$, the assertion follows. 

For Assertion~\eqref{th_sj_item2}, to see that $M^{\infty,1}_{\Wig(\phi,\phi)}(\pi^\#,\theta\times\theta)$ is closed under the Moyal product, just use Corollary~\ref{main_cor_mult} for $p_1=p_2=p$ and $q_1=q_2=q$. 
Next note that if 
$\Vert a\Vert_{M^{\infty,1}_{\Wig(\phi,\phi)}(\pi^\#,\theta\times\theta)}=0$, 
then $\Ac_{\Wig(\phi,\phi)}^{\pi^\#,\theta\times\theta}a=0$, and then 
it is straightforward to check that $a=0$. 
To prove that the norm of $M^{\infty,1}_{\Wig(\phi,\phi)}(\pi^\#,\theta\times\theta)$ is complete, it suffices to check that any Cauchy sequence $\{a_j\}_{j\ge1}$  has a convergent subsequence. 
By selecting a suitable subsequence, we may assume that 
$\Vert a_{j+1}-a_j\Vert_{M^{\infty,1}_{\Wig(\phi,\phi)}(\pi^\#,\theta\times\theta)}
<\frac{1}{2^j}$ for every $j\ge0$, where $a_0:=0$. 
It follows by Corollary~\ref{main_cor} that there exists 
$\beta_{j+1}\in L^1(\Xi)$ such that $\Vert\beta_{j+1}\Vert_{L^1(\Xi)}<\frac{1}{2^j}$ 
and 
\begin{equation}\label{th_sj_proof_eq1}
(\forall X_1,X_2\in\Xi)\quad \vert(\Op^\theta(a_{j+1}-a_j)\phi_{X_1}\mid\phi_{X_2})\vert
\le\beta_{j+1}(X_1-X_2).
\end{equation}
Note that $\beta:=\sum\limits_{j=1}^\infty\beta_j\in L^1(\Xi)$ and, by summing up the above inequalities for $j=0,\dots,k-1$ we get 
$$(\forall X_1,X_2\in\Xi)\quad \vert(\Op^\theta(a_k)\phi_{X_1}\mid\phi_{X_2})\vert
\le(\beta_1+\cdots+\beta_k)(X_1-X_2)\le\beta(X_1-X_2).$$
On the other hand, since $\{a_k\}_{k\ge1}$ is a Cauchy sequence 
in $M^{\infty,1}_{\Wig(\phi,\phi)}(\pi^\#,\theta\times\theta)$, 
it follows by Assertion~\eqref {th_sj_item1} that there exists 
an operator $T\in\Bc(\Hc)$ such that $\lim\limits_{k\to\infty}\Vert\Op^\theta(a_k)-T\Vert=0$. 
It follows by the above inequalities for $k\to\infty$ that 
$$(\forall X_1,X_2\in\Xi)\quad 
\vert(T\phi_{X_1}\mid\phi_{X_2})\vert
\le\beta(X_1-X_2).$$
Moreover, it follows by Remark~\ref{moyal} that $T=\Op^\theta(a)$ for some $a\in\Sc'(\Xi^*)$, and then $a\in M^{\infty,1}_{\Wig(\phi,\phi)}(\pi^\#,\theta\times\theta)$ by the above inequality along with Corollary~\ref{main_cor}. 
Finally, by summing up the inequalities \eqref{th_sj_proof_eq1} 
for $j=k,k+1,\dots$ and using Corollary~\ref{main_cor} again, we get 
$\Vert a-a_k\Vert_{M^{\infty,1}_{\Wig(\phi,\phi)}(\pi^\#,\theta\times\theta)}
\le \sum\limits_{j=k}^\infty\frac{1}{2^j}=\frac{1}{2^{k-1}}$ for arbitrary $k\ge1$, hence $a=\lim\limits_{k\to\infty}a_k$ in $M^{\infty,1}_{\Wig(\phi,\phi)}(\pi^\#,\theta\times\theta)$. 

For Assertion~\eqref{th_sj_item3} let 
$a_0\in \Mc^{\infty,1}_{\Wig(\phi,\phi)}(\pi^\#,\theta\times\theta)$ and  assume that the operator 
$\Op^\theta(a_0)$ is invertible in $\Bc(\Hc)$. 
There exist $\alpha\in\CC$ and $a_{00}\in M^{\infty,1}_{\Wig(\phi,\phi)}(\pi^\#,\theta\times\theta)$ 
such that $a_0=\alpha+a_{00}$. 
We shall use the notation of Definition~\ref{ker1} and also  
recall that for the symbol $1\in \Mc^{\infty,1}_{\Wig(\phi,\phi)}(\pi^\#,\theta\times\theta)$ 
we get the operator $T_1\in\Bc(L^2(\Xi))$ with the properties $T_1=T_1^*=(T_1)^2$ and $\Ran T_1=\Ran V$. 
Moreover, for every 
$a\in \Mc^{\infty,1}_{\Wig(\phi,\phi)}(\pi^\#,\theta\times\theta)$  we have 
$T_aT_1=T_1T_a=T_a$, and in particular $T_a$ vanishes on $(\Ran T_1)^\perp$. 

It then follows that if $z\in\CC\setminus\{\alpha\}$ and the operator 
$$z\1-\Op^\theta(a_0)=\Op^\theta(z-a_0)=\Op^\theta(z-\alpha-a_{00})$$ 
is invertible in $\Bc(\Hc)$, 
then $(z-\alpha)(\1-T_1)+T_{z-\alpha-a_{00}}=(z-\alpha)\1-T_{a_{00}}$ is invertible in $\Bc(L^2(\Xi))$. 
On the other hand, since $a_{00}\in M^{\infty,1}_{\Wig(\phi,\phi)}(\pi^\#,\theta\times\theta)$, 
it follows by Remark~\ref{main_cor} that there exists $\beta_0\in L^1(\Xi)$ such that 
the integral kernel $C_{a_{00}}$ of $T_{a_{00}}$ satisfies the estimate 
$\vert C_{a_{00}}(X-Y)\vert\le\beta_0(X-Y)$ for all $X,Y\in\Xi$. 
We then get by \cite[Th.~5.4.7]{Ku99} (see also \cite{Ku01}) that 
$((z-\alpha)\1-T_{a_{00}})^{-1}=(z-\alpha)^{-1}\1-N_z$, 
where $N_z\in\Bc(L^2(\Xi))$ is an integral operator whose 
kernel $K_{N_z}$ satisfies a similar estimate 
$\vert K_{N_z}(X-Y)\vert\le\beta_z(X-Y)$ for all $X,Y\in\Xi$ 
and a suitable function $\beta_z\in L^1(\Xi)$. 
Since $T_{a_{00}}T_1=T_1T_{a_{00}}=T_{a_{00}}$, it follows that $N_zT_1=T_1N_z=N_z$.  
By using the fact that $\Op^\theta\colon\Sc'(\Xi^*)\to\Lc(\Hc_\infty,\Hc_{-\infty})$ is a 
linear isomorphism (see \cite[Rem.~3.11]{BB10d}) and Lemma~\ref{ker2}, 
we then get $b_z\in\Sc'(\Xi^*)$ such that $\Op^\theta(b_z)\in\Bc(\Hc)$ 
and $T_{b_z}=N_z$. 
Moreover, the estimates satisfied by the integral kernel of $N_z$ show that 
actually $b_z\in M^{\infty,1}_{\Wig(\phi,\phi)}(\pi^\#,\theta\times\theta)$ by Remark~\ref{main_cor} again. 
 
 We have thus shown that if $z\in\CC\setminus\{\alpha\}$ and $z\1-\Op^\theta(a_0)$ (which is equal to $\Op^\theta(z-a_0)$) is invertible in $\Bc(\Hc)$, then there exists $b_z\in M^{\infty,1}_{\Wig(\phi,\phi)}(\pi^\#,\theta\times\theta)$ such that $\Op^\theta(z-a_0)^{-1}=\Op^\theta((z-\alpha)^{-1}-b_z)$. 
 Thus we can see that $z-a_0$ is invertible in the unital Banach algebra $\Mc^{\infty,1}_{\Wig(\phi,\phi)}(\pi^\#,\theta\times\theta)$ and its inverse is 
 $(z-\alpha)^{-1}-b_z$. 
 In particular, $z\mapsto b_z$ is a holomorphic mapping from 
 the complement of the spectrum of $\Op^\theta(a_0)$ into $\Mc^{\infty,1}_{\Wig(\phi,\phi)}(\pi^\#,\theta\times\theta)$. 
Now, since $\Op^\theta(a_0)\in\Bc(\Hc)$ is an invertible operator, there exists a piecewise smooth closed curve that does not contain $\alpha$ and surrounds the spectrum of $\Op^\theta(a_0)$, 
and we have by holomorphic functional calculus 
$$\Op^\theta(a_0)^{-1}
=\frac{1}{2\pi\ie}\int\limits_\gamma\frac{1}{z}(z-\Op^\theta(a_0))^{-1}\de z.$$
Since $\Mc^{\infty,1}_{\Wig(\phi,\phi)}(\pi^\#,\theta\times\theta)$ is a unital Banach algebra, we can define 
$$b_0:=\frac{1}{2\pi\ie}\int\limits_\gamma\frac{1}{z}((z-\alpha)^{-1}-b_z)\de z
\in \Mc^{\infty,1}_{\Wig(\phi,\phi)}(\pi^\#,\theta\times\theta).$$ 
Then 
$$\begin{aligned}
\Op^\theta(b_0)
&= \frac{1}{2\pi\ie}\int\limits_\gamma\frac{1}{z}\Op^\theta((z-\alpha)^{-1}-b_z)\de z
=\frac{1}{2\pi\ie}\int\limits_\gamma\frac{1}{z}(z-\Op^\theta(a_0))^{-1}\de z \\
&=\Op^\theta(a_0)^{-1},
\end{aligned}$$ 
which completes the proof. 
\end{proof}

\begin{remark}
\normalfont
A more general result on the continuity of the operators $\Op^\theta(a)$ 
on modulation spaces was obtained in \cite{BB10d} by a completely different method based on continuity properties of the cross-Wigner distribution. 
\qed
\end{remark}

\section{Applications to the magnetic Weyl calculus}\label{Sect3}

\begin{notation}
\normalfont
Let $G$ be a simply connected, nilpotent Lie group with the Lie algebra $\gg$ 
and the inverse of the exponential map denoted by $\log_G\colon G\to\gg$. 
We denote by 
$\lambda\colon G\to\End(\Ci(G))$, $g\mapsto\lambda_g$,  
the left regular representation defined by $(\lambda_g\phi)(x)=\phi(g^{-1}x)$ 
for every $x,g\in G$ and $\phi\in\Ci(G)$. 
Moreover, we denote by $\1$ the constant function which is identically equal to~1 on $G$. 
(This should not be confused with the unit element of $G$, which is denoted in the same way.)

If the 
space of globally defined smooth vector fields on $G$ 
(that is, global sections in its tangent bundle) is  denoted by $\Xg(G)$ 
and the space of globally defined smooth 1-forms  
(that is, global sections in its cotangent bundle) is denoted by $\Omega^1(G)$,  
then there exists a natural bilinear map 
$$\langle\cdot,\cdot\rangle\colon\Omega^1(G)\times\Xg(G)\to\Ci(G)$$
defined as usually by evaluations at every point of~$G$. 

For arbitrary $g\in G$ we denote the corresponding right-translation mapping by 
$R_g\colon G\to G$, $h\mapsto hg$. 
Then we define the injective linear mapping 
$$\iotaR\colon\gg\to\Xg(G)$$ 
by $(\iotaR X)(g)=(T_{\1}(R_g))X\in T_g G$ 
for all $g\in G$ and $X\in\gg$. 

Moreover, we define 
$$\Xi=\Xi^*:=\gg\times\gg^*$$ and 
the symplectic duality pairing 
$$\langle\cdot,\cdot\rangle\colon\Xi^*\times\Xi\to\RR,\quad 
((X_1,\xi_1),(X_2,\xi_2))\mapsto
\langle\xi_1,X_2\rangle-\langle\xi_2,X_1\rangle$$
where $\langle\cdot,\cdot\rangle\colon\gg^*\times\gg\to\RR$ is the natural duality pairing. 
\qed
\end{notation}

\begin{setting}\label{orbit0}
\normalfont  
Throughout this section we denote by $\Fc$ a linear space of real functions on the Lie group $G$ which is endowed with a sequentially complete, locally convex topology and satisfies the following conditions: 
\begin{enumerate}
\item\label{orbit0_item1}
The linear space $\Fc$ is invariant under the representation of $G$ by left translations, 
that is, if $\phi\in\Fc$ and $g\in G$ then $\lambda_g\phi\in\Fc$. 
\item\label{orbit0_item2}
There exist the continuous inclusion maps 
$\gg^*\hookrightarrow\Fc\hookrightarrow\Cpol(G)$, 
where the embedding $\gg^*\hookrightarrow\Fc$ is given by $\xi\mapsto\xi\circ\log_G$. 
\item\label{orbit0_item3}
The mapping $G\times\Fc\to\Fc$, $(g,\phi)\mapsto\lambda_g\phi$ is smooth. 
For every $\phi\in\Fc$ we denote by $\dot{\lambda}(\cdot)\phi\colon\gg\to\Fc$ 
the differential of the mapping $g\mapsto\lambda_g\phi$ at the point $\1\in G$. 
\end{enumerate}
For instance, the function space $\Cpol(G)$ is admissible. Here  $\Cpol(G)$ is the space of smooth functions $\phi\colon G \to \RR$ such that the function $\phi\circ\log_G\colon \gg \to \RR$ and its partial derivatives have polynomial growth. 
\qed
\end{setting}

\begin{definition}
\normalfont
We define the semidirect product $M=\Fc\rtimes_\lambda G$, which is a locally convex Lie group, and the unitary representation 
$$\pi\colon M\to\Bc(L^2(G)), \quad \pi(\phi,g)f=\ee^{\ie\phi}\lambda_g f 
\text{ for }\phi\in\Fc, g\in G,\text{ and }f\in L^2(G).$$
If we have $A\in\Omega^1(G)$ with \emph{$\Fc$-growth}, in the sense that   
$\langle A,\iotaR X\rangle\in\Fc$ whenever $X\in\gg$,  
then we define the linear mapping 
$$\theta^A\colon\gg\times\gg^*\to\mg=\Fc\ltimes_{\dot\lambda}\gg,\quad 
(X,\xi)\mapsto(\xi\circ\log_G+\langle A,\iotaR X\rangle,X).$$
\qed
\end{definition}

\begin{remark}\label{conditions}
\normalfont
 The representation $\pi$ is twice nuclearly smooth and its space of smooth vectors 
is the Schwartz space $\Sc(G)$ (\cite[Cor.4.5]{BB10d}) 
and the following assertions hold for every 1-form  
$A\in\Omega^1(G)$ with $\Fc$-growth: 
\begin{enumerate}
\item\label{conditions_item1} 
The representation $\pi$ satisfies the orthogonality relations along the mapping~$\theta^A$. 
\item\label{conditions_item2} 
The representation $\pi$ satisfies the density condition  along~$\theta^A$.
\item\label{conditions_item3} 
The localized Weyl calculus for $\pi$ along $\theta^A$ is regular and defines a unitary operator 
$\Op^{\theta^A}\colon L^2(\gg\times\gg^*)\to\Sg_2(L^2(G))$.
\item\label{conditions_item4} 
If $u\in\U(\mg_{\CC})$ and $\phi\in\Sc(G)$,  
the function $\Vert\de\pi(\Ad_{\U(\mg_{\CC})}(\exp_M(\theta^A(\cdot)))u)\phi\Vert$ has polynomial growth on~$\gg\times\gg^*$.
\end{enumerate} 
These properties have been established in \cite[Cor.4.5]{BB10d}, 
and it thus follows that all of the conditions of Setting~\ref{loc0} are satisfied in the present setting provided by the representation $\pi$ and the linear mapping $\theta^A$. 

Just as in \cite{BB09a} we shall denote the corresponding Moyal product $\#^{\theta^A}$ simply by $\theta^A$ and localized Weyl calculus for $\pi$ along $\theta^A$ is denoted by $\Op^A(\cdot)$ and is called the \emph{magnetic Weyl calculus} associated with the magnetic potential $A\in\Omega^1(G)$. 
The corresponding \emph{magnetic field} is $B:=\de A\in\Omega^2(G)$. 
\qed
\end{remark}

\begin{theorem}\label{sj_mag}
If $\phi\in\Sc(G)$, then the following assertions hold: 
\begin{enumerate}
\item\label {sj_mag_item1} 
For every $a\in M^{\infty,1}_{\Wig(\phi,\phi)}(\pi^\#,\theta^A\times\theta^A)$ 
we have $\Op^A(a)\in\Bc(L^2(G))$ and moreover 
$\Vert\Op^A(a)\Vert\le 
\Vert a\Vert_{M^{\infty,1}_{\Wig(\phi,\phi)}(\pi^\#,\theta^A\times\theta^A)}$.
\item\label  {sj_mag_item2} 
The Moyal product $\#^A$ makes the modulation space 
$M^{\infty,1}_{\Wig(\phi,\phi)}(\pi^\#,\theta^A\times\theta^A)$ 
into an associative Banach algebra. 
\item\label{sj_mag_item3} 
If $a_0\in \Mc^{\infty,1}_{\Wig(\phi,\phi)}(\pi^\#,\theta^A\times\theta^A)$ and 
$\Op^A(a_0)\in\Bc(L^2(G))$ is invertible, 
then there exists $b_0\in \Mc^{\infty,1}_{\Wig(\phi,\phi)}(\pi^\#,\theta^A\times\theta^A)$ 
such that $\Op^A(a_0)^{-1}=\Op^A(b_0)$. 
\end{enumerate} 
\end{theorem}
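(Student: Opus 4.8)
The statement is the concrete incarnation of the abstract Theorem~\ref{th_sj} in the setting of the magnetic Weyl calculus, so the plan is simply to check that the hypotheses of Setting~\ref{loc0} are in force and then quote Theorem~\ref{th_sj} verbatim. First I would recall, as already recorded in Remark~\ref{conditions}, that for any $1$-form $A\in\Omega^1(G)$ with $\Fc$-growth the representation $\pi\colon M=\Fc\rtimes_\lambda G\to\Bc(L^2(G))$ is twice nuclearly smooth, with $\Hc_\infty=\Sc(G)$, and that along the linear mapping $\theta^A\colon\gg\times\gg^*\to\mg$ the representation satisfies the orthogonality relations, the density condition, regularity of the localized Weyl calculus, and the polynomial-growth condition \ref{conditions_item4}; the data $\Xi=\Xi^*=\gg\times\gg^*$ with the symplectic pairing and the associated Fourier transform fulfil item (3) of Setting~\ref{loc0}. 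Hence all the conditions of Setting~\ref{loc0} hold, with $\pi$, $\Xi$, and $\theta:=\theta^A$, and with any window vector $\phi\in\Hc_\infty=\Sc(G)$.

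Once that verification is in place, Assertion~\eqref{sj_mag_item1} is exactly Theorem~\ref{th_sj}\eqref{th_sj_item1} applied to $\pi$ and $\theta^A$, using the notational convention of Remark~\ref{conditions} that $\Op^A:=\Op^{\theta^A}$ and $\#^A:=\#^{\theta^A}$. Similarly, Assertion~\eqref{sj_mag_item2} is Theorem~\ref{th_sj}\eqref{th_sj_item2}, and Assertion~\eqref{sj_mag_item3} is Theorem~\ref{th_sj}\eqref{th_sj_item3}, with $\Mc^{\infty,1}_{\Wig(\phi,\phi)}(\pi^\#,\theta^A\times\theta^A)=\CC 1+M^{\infty,1}_{\Wig(\phi,\phi)}(\pi^\#,\theta^A\times\theta^A)$ as in~\eqref{unit}. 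No new computation is needed; the proof is a one-paragraph reduction.

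**Where the real work lies.** There is essentially no obstacle at this stage: all the analytic substance — boundedness via the kernel estimate and Schur-type bounds in Lemma~\ref{ker2} and the proof of Theorem~\ref{th_sj}\eqref{th_sj_item1}, completeness and the Banach-algebra structure in \eqref{th_sj_item2}, and the spectral-invariance argument in \eqref{th_sj_item3} resting on Kurbatov's theorem \cite{Ku99} together with holomorphic functional calculus in the unital algebra $\Mc^{\infty,1}$ — has already been carried out abstractly. The only thing that could conceivably go wrong is the verification that Setting~\ref{loc0} genuinely applies, i.e.\ that $\Xi^*$ being taken here as the \emph{linear} space $\gg\times\gg^*$ with a genuine duality pairing is a special case of (and not an exception to) the slightly broadened framework of Remark~\ref{pol}; this is immediate, since a finite-dimensional vector space with a nondegenerate pairing is in particular a polynomial manifold with a Radon measure for which the Fourier transform has the required isomorphism properties. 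Thus the main (and only) step is the citation of Remark~\ref{conditions}, and the theorem follows.
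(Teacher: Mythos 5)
Your proposal is correct and follows exactly the paper's own argument: verify via Remark~\ref{conditions} that the hypotheses of Setting~\ref{loc0} hold for $\pi$ and $\theta^A$, then invoke Theorem~\ref{th_sj} (and Corollary~\ref{main_cor_mult}) with the notational convention $\Op^A=\Op^{\theta^A}$, $\#^A=\#^{\theta^A}$. Nothing more is needed.
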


\begin{proof}
The above Remark~\ref{conditions} shows that 
Corollary~\ref{main_cor_mult} and Theorem~\ref{th_sj} apply, 
and then the assertions follow.
\end{proof}

\section{Applications to representations of nilpotent Lie groups}\label{Sect4}

All of the conditions of Setting~\ref{loc0} are satisfied if $M$ is a finite-dimensional nilpotent Lie group, $\pi$ is a unitary irreducible representation with the corresponding coadjoint orbit $\Xi^*$, $\Xi$ is a predual of the coadjoint orbit $\Xi$ (in the sense of \cite{BB10c}) and $\theta\colon\Xi\hookrightarrow\mg$ is the embedding map. 
This will be the setting of the present section, and our point here is to describe how the abstract results of Section~\ref{Sect2} can be specialized in this framework, and also to point out how they can be further sharpened in the special case when $\pi$ is a square-integrable representation modulo the center. 

\begin{setting}\label{predual_sett}
\normalfont 
Throughout this section we shall use the following notation:
\begin{enumerate}
 \item Let $G$ be a connected, simply connected, nilpotent Lie group with Lie algebra~$\gg$.  
 Then the exponential map $\exp_G\colon\gg\to G$ is a diffeomorphism 
 with the inverse denoted by $\log_G\colon G\to\gg$. 
 \item We denote by $\gg^*$ the linear dual space to $\gg$ and 
  by $\hake{\cdot,\cdot}\colon\gg^*\times\gg\to{\RR}$ the natural duality pairing. 
 \item Let $\xi_0\in\gg^*$ with the corresponding coadjoint orbit $\Oc:=\Ad_G^*(G)\xi_0\subseteq\gg^*$.  
 \item Let $\pi\colon G\to\Bc(\Hc)$ be any unitary irreducible representations 
associated with the coadjoint orbit $\Oc$ by Kirillov's theorem (\cite{Ki62}).
 \item The {\it isotropy group} at $\xi_0$ is $G_{\xi_0}:=\{g\in G\mid\Ad_G^*(g)\xi_0=\xi_0\}$ 
 with the corresponding {\it isotropy Lie algebra} $\gg_{\xi_0}=\{X\in\gg\mid\xi_0\circ\ad_{\gg}X=0\}$. 
 If we denote the {\it center} of $\gg$ by $\zg:=\{X\in\gg\mid[X,\gg]=\{0\}\}$, 
 then $\zg\subseteq\gg_{\xi_0}$. 
 \item Let $n:=\dim\gg$ and fix a sequence of ideals in $\gg$, 
$$\{0\}=\gg_0\subset\gg_1\subset\cdots\subset\gg_n=\gg$$
such that $\dim(\gg_j/\gg_{j-1})=1$ and $[\gg,\gg_j]\subseteq\gg_{j-1}$ 
for $j=1,\dots,n$. 
 \item Pick any $X_j\in\gg_j\setminus\gg_{j-1}$ for $j=1,\dots,n$, 
so that the set $\{X_1,\dots,X_n\}$ will be a {\it Jordan-H\"older basis} in~$\gg$. 
\end{enumerate} 
Also consider the set of \emph{jump indices} of the coadjoint orbit $\Oc$ 
with respect to the aforementioned Jordan-H\"older basis, 
$$e:=\{j\in\{1,\dots,n\}\mid \gg_j\not\subseteq\gg_{j-1}+\gg_{\xi_0}\}
=\{j\in\{1,\dots,n\}\mid X_j\not\in\gg_{j-1}+\gg_{\xi_0}\}$$ 
and then define the corresponding \emph{predual of the coadjoint orbit}~$\Oc$, 
$$\gg_e:=\spa\{X_j\mid j\in e\}\subseteq\gg.$$
We note the direct sum decomposition $\gg=\gg_{\xi_0}\dotplus\gg_e$. 
\qed
\end{setting}

\begin{theorem}\label{sq_part}
If the representation $\pi$ is square integrable modulo the center, then the following assertions hold: 
\begin{enumerate}
\item If $p_1,p_2,p,q_1,q_2,q\in[1,\infty]$ 
satisfy the conditions 
$\frac{1}{p_1}+\frac{1}{p_2}=\frac{1}{p}$ and $\frac{1}{q_1}+\frac{1}{q_2}=1+\frac{1}{q}$,  
then the Moyal product $\#^\theta$ defines a continuous bilinear map 
$$ M^{p_1,q_1}(\pi^\#)\times M^{p_2,q_2}(\pi^\#)\to M^{p,q}(\pi^\#).$$
\item The Moyal product $\#^\theta$ makes the modulation space 
$M^{\infty,1}(\pi^\#)$ 
into an associative involutive Banach algebra and the Weyl calculus defines an injective continuous $*$-homomorphism 
$\Op\colon M^{\infty,1}(\pi^\#)\to\Bc(\Hc)$. 
\item
If $a_0\in \Mc^{\infty,1}(\pi^\#)$ and  
$\Op^\theta(a_0)\in\Bc(\Hc)$ is an invertible operator, 
then there exists $b_0\in \Mc^{\infty,1}(\pi^\#)$ 
such that $\Op^\theta(a_0)^{-1}=\Op^\theta(b_0)$.
\end{enumerate}
\end{theorem}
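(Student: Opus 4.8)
The plan is to observe that Theorem~\ref{sq_part} is essentially a specialization of the abstract results already proved in Section~\ref{Sect2}, so the main task is to verify that the hypotheses of Setting~\ref{loc0} are met in the square-integrable-modulo-center case and to reconcile the notation $M^{p,q}(\pi^\#)$ (without a window vector displayed) with the window-dependent spaces $M^{p,q}_{\Wig(\phi,\phi)}(\pi^\#,\theta\times\theta)$ used earlier. First I would recall from \cite{BB10c} (or re-derive) that when $\pi$ is square integrable modulo the center, the orthogonality relations and the density condition along the embedding $\theta\colon\gg_e\hookrightarrow\gg$ hold, the localized Weyl calculus is regular, and the polynomial-growth condition Setting~\ref{loc0}(4)(d) is satisfied; this is exactly what is needed to invoke Corollary~\ref{main_cor_mult} and Theorem~\ref{th_sj}. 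I would also note that in this square-integrable case the modulation spaces are independent of the choice of window vector $\phi\in\Hc_\infty\setminus\{0\}$ (up to equivalence of norms), which justifies suppressing $\phi$ from the notation and writing $M^{p,q}(\pi^\#)$.

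With that groundwork, part (1) is an immediate application of Corollary~\ref{main_cor_mult} with the relevant exponents, once one fixes any admissible window $\phi$. Part (2) then follows by combining part (1) (for $p_1=p_2=p=\infty$, $q_1=q_2=q=1$) with Theorem~\ref{th_sj}\eqref{th_sj_item1} and \eqref{th_sj_item2}: the former gives closedness of $M^{\infty,1}(\pi^\#)$ under $\#^\theta$ together with the submultiplicativity estimate, and the latter gives boundedness of $\Op^\theta(a)$, the involution, associativity, and completeness of the norm. The only genuinely new assertion in part (2) is the \emph{injectivity} of $\Op\colon M^{\infty,1}(\pi^\#)\to\Bc(\Hc)$; but this is already contained in the proof of Theorem~\ref{th_sj}\eqref{th_sj_item2}, where it is observed that if $\Vert a\Vert_{M^{\infty,1}}=0$ then $\Ac^{\pi^\#,\theta\times\theta}_{\Wig(\phi,\phi)}a=0$ and hence $a=0$; more directly, $\Op^\theta\colon\Sc'(\Xi^*)\to\Lc(\Hc_\infty,\Hc_{-\infty})$ is a linear isomorphism by Remark~\ref{moyal}, so $\Op^\theta(a)=0$ forces $a=0$. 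Part (3) is the specialization of Theorem~\ref{th_sj}\eqref{th_sj_item3} with $\theta$ in place of the abstract $\theta$, verbatim.

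I expect the main obstacle to be purely bookkeeping rather than conceptual: one must check carefully that square-integrability modulo the center indeed yields \emph{all} four conditions in Setting~\ref{loc0}(4), in particular the regularity of the localized Weyl calculus and condition (d) on polynomial growth of $\Vert\de\pi(u)\pi(\exp_M(\theta(\cdot)))\phi_0\Vert$; this is where one leans on the detailed analysis of \cite{BB10c} and \cite{BB10d}. The secondary subtlety is justifying the window-independence of the modulation spaces $M^{p,q}(\pi^\#)$ so that the statement makes sense without a named window; this is the standard change-of-window argument, which in the present abstract framework can be run using Corollary~\ref{main_cor_gen} (or the almost-diagonalization estimate in Remark~\ref{main_cor}) applied with two different windows, together with the fact that $\Wig(\phi,\psi)\in\Sc(\Xi^*)$ for $\phi,\psi\in\Hc_\infty$. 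Once these two points are settled, the proof reduces to the one-line invocation ``apply Corollary~\ref{main_cor_mult} and Theorem~\ref{th_sj}'', exactly as in the proof of Theorem~\ref{sj_mag}.
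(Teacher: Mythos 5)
Your proposal is correct and follows essentially the same route as the paper: the paper's proof simply notes that the hypotheses of Setting~\ref{loc0} hold in this situation (as stated at the opening of Section~\ref{Sect4}), cites \cite[Example 3.4(2)]{BB10c} for the window-independence of the spaces $M^{p,q}(\pi^\#)$, and then invokes Corollary~\ref{main_cor_mult} and Theorem~\ref{th_sj}, exactly as you do. The only cosmetic difference is that you sketch a change-of-window argument where the paper just cites the reference, so no further comment is needed.
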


\begin{proof}
Recall that the modulation spaces of symbols $M^{p,q}(\pi^\#)$ are independent on the choice of a window vector by \cite[Example 3.4(2)]{BB10c}. 
Then the assertions follow by the above Corollary~\ref{main_cor_mult} 
and Theorem~\ref{th_sj}. 
\end{proof}

\begin{theorem}\label{monot}
Assume that the representation $\pi$ is square integrable modulo the center of $G$ and let $\gg_e=\gg_e^1\dotplus\gg_e^2$ be any decomposition of the predual into a direct sum of linear subspaces. 
If $\phi\in\Hc_\infty$ and we have $1\le p_1\le p_2\le\infty$ and 
$1\le q_1\le q_2\le\infty$, then  
$M_\phi^{p_1,q_1}(\pi)\subseteq M_\phi^{p_2,q_2}(\pi)$. 
\end{theorem}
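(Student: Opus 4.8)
The plan is to reduce the statement to its endpoint case $p_2=q_2=\infty$ and then quote the elementary inclusion~\eqref{not_eq1}. Concretely, I claim it suffices to establish the single inclusion
$$M^{p_1,q_1}_\phi(\pi)\subseteq M^{\infty,\infty}_\phi(\pi),$$
where throughout this section $M^{p,q}_\phi(\pi)$ abbreviates the space $M^{p,q}_\phi(\pi,\theta)$ of Notation~\ref{not} formed with the fixed decomposition $\gg_e=\gg_e^1\dotplus\gg_e^2$ and the embedding $\theta\colon\gg_e\hookrightarrow\gg$. Granting this and using $p_1\le p_2$, $q_1\le q_2$ together with~\eqref{not_eq1}, we obtain
$$M^{p_1,q_1}_\phi(\pi)=M^{p_1,q_1}_\phi(\pi)\cap M^{\infty,\infty}_\phi(\pi)\subseteq M^{p_2,q_2}_\phi(\pi)\cap M^{\infty,\infty}_\phi(\pi)\subseteq M^{p_2,q_2}_\phi(\pi),$$
which is exactly the assertion. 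Moreover, by~\cite[Example 3.4(2)]{BB10c} the modulation spaces $M^{p,q}_\phi(\pi)$ do not depend on the choice of the window, so there is no loss of generality in normalizing $\Vert\phi\Vert=1$.

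To prove the displayed inclusion I would exploit the reproducing-kernel structure that square integrability modulo the center makes available. With $\Vert\phi\Vert=1$ the orthogonality relations along $\theta$ turn $V:=\Ac^{\pi,\theta}_\phi\colon\Hc\to L^2(\gg_e)$ into an isometry whose range is the reproducing-kernel subspace with kernel $K(X,Y)=(\pi(\exp_G(\theta(X)))\phi\mid\pi(\exp_G(\theta(Y)))\phi)$, so that every $g\in\Hc$ satisfies $(\Ac^{\pi,\theta}_\phi g)(X)=\int_{\gg_e}K(X,Y)(\Ac^{\pi,\theta}_\phi g)(Y)\,\de Y$. The first step is to extend this identity to an arbitrary $f\in M^{p_1,q_1}_\phi(\pi)\subseteq\Hc_{-\infty}$: pairing against $\Hc_\infty$ and using that $\Ac^{\pi,\theta}_\phi\psi\in\Sc(\gg_e)$ for every $\psi\in\Hc_\infty$ — which follows from the smoothness of $\psi$ and from square integrability modulo the center applied to the vectors $\de\pi(u)\psi$, $u\in\U(\gg_\CC)$ — H\"older's inequality renders the integral absolutely convergent, so the formula persists for such $f$. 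The second step is the covariance computation. Because $\pi$ is square integrable modulo the center, the family $\{\pi(\exp_G(\theta(X)))\}_{X\in\gg_e}$ is, up to scalars of modulus one, a projective unitary representation of the additive group $(\gg_e,+)$; consequently $|K(X,Y)|=|(\Ac^{\pi,\theta}_\phi\phi)(X-Y)|$, and the reproducing identity is dominated by an ordinary convolution on $\gg_e$,
$$\bigl|\Ac^{\pi,\theta}_\phi f\bigr|\le\bigl|\Ac^{\pi,\theta}_\phi\phi\bigr|\ast\bigl|\Ac^{\pi,\theta}_\phi f\bigr|.$$
Since $\Ac^{\pi,\theta}_\phi\phi\in\Sc(\gg_e)$ belongs in particular to the mixed-norm space $L^{p_1',q_1'}(\gg_e^1\times\gg_e^2)$, with $p_1',q_1'$ the conjugate exponents of $p_1,q_1$, the mixed-norm Young inequality — of the same type as the one used in the proof of Corollary~\ref{main_cor_mult}, here with target $L^{\infty,\infty}=L^\infty$ — yields $\Ac^{\pi,\theta}_\phi f\in L^\infty(\gg_e)$, i.e.\ $f\in M^{\infty,\infty}_\phi(\pi)$, as required.

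I expect the main obstacle to be the covariance computation, that is, the identification of the reproducing kernel $K$ with (the modulus of) a translate of $\Ac^{\pi,\theta}_\phi\phi$: for a general nilpotent $G$ the product $\exp_G(\theta(-Y))\exp_G(\theta(X))$ differs from $\exp_G(\theta(X-Y))$ by Baker--Campbell--Hausdorff corrections lying in $[\gg,\gg]$, which need not be central, and it is precisely square integrability modulo the center — equivalently, the coadjoint orbit being an affine subspace of $\gg^*$ — that allows these corrections to be absorbed into unimodular factors, turning $\Ac^{\pi,\theta}_\phi$ into a genuine short-time Fourier transform on $\gg_e$. I would take this structure, as well as the extension of the reproducing formula to $M^{p_1,q_1}_\phi(\pi)$, from~\cite{BB10c}; the remaining ingredients — H\"older's and Young's inequalities for mixed-norm Lebesgue spaces — are routine.
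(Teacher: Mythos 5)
Your reduction to the endpoint inclusion $M^{p_1,q_1}_\phi(\pi)\subseteq M^{\infty,\infty}_\phi(\pi)$ followed by \eqref{not_eq1}, and the use of the reproducing identity $(\Ac^{\pi,\theta}_\phi f)(X)=\int_{\gg_e}\overline{K(X,Y)}\,(\Ac^{\pi,\theta}_\phi f)(Y)\,\de Y$ for $f\in\Hc_{-\infty}$ with $\Ac^{\pi,\theta}_\phi f\in L^{p_1,q_1}$, are exactly the paper's steps. The gap is in your covariance computation. Square integrability modulo the center does \emph{not} make $\{\pi(\exp_G(\theta(X)))\}_{X\in\gg_e}$ a projective representation of the additive group $(\gg_e,+)$. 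Writing $K(X,Y)=(\phi\mid\pi(\exp_G((-X)\ast Y))\phi)$ with $\ast$ the Baker--Campbell--Hausdorff product of $\gg$, the flat-orbit hypothesis only lets you absorb the \emph{central} part of the BCH corrections into a unimodular factor: by the result of \cite{Ma07} invoked in the paper, $\pi(\exp_G((-X)\ast Y))=\ee^{\ie\alpha(-X,Y)}\pi(\exp_G((-X)\ast_e Y))$, where $\ast_e$ is the product induced on $\gg_e\simeq\gg/\zg$. The corrections lying in $[\gg,\gg]$ but not in $\zg$ remain genuine operators, so what you get is a projective representation of the (generally nonabelian) group $(\gg_e,\ast_e)$, and $\vert K(X,Y)\vert=\vert(\Ac^{\pi,\theta}_\phi\phi)((-X)\ast_e Y)\vert$, not $\vert(\Ac^{\pi,\theta}_\phi\phi)(X-Y)\vert$. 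The two coincide only when $[\gg,\gg]\subseteq\zg$, i.e.\ for groups of step at most two such as the Heisenberg groups; there are square integrable (modulo the center) representations of groups of step three (e.g.\ the six-dimensional Dynin--Folland ``meta-Heisenberg'' group), and for these your domination of $\vert\Ac^{\pi,\theta}_\phi f\vert$ by a Euclidean convolution, and hence the appeal to Young's inequality, breaks down.

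The defect cannot be repaired merely by replacing $+$ with $\ast_e$ and quoting Young's inequality on the unimodular group $(\gg_e,\ast_e)$: the modulation norm is a mixed norm relative to the fixed linear splitting $\gg_e=\gg_e^1\dotplus\gg_e^2$, and the spaces $L^{p,q}(\gg_e^1\times\gg_e^2)$ with $p\ne q$ are not invariant under the shearing translations $Y\mapsto(-X)\ast_e Y$ (only the pure norms $p=q$ are). The paper does its real work at exactly this point: it keeps the kernel $R_\phi(X,Y)=K(X,Y)$ in group-translated form and proves the uniform estimate $\sup_{X\in\gg_e}\Vert R_\phi(X,\cdot)\Vert_{L^{r,s}(\gg_e^1\times\gg_e^2)}<\infty$ for all $r,s\in[1,\infty]$, using that $\Ac^{\pi,\theta}_\phi\phi\in\Sc(\gg_e)$ (Pedersen) and that Lebesgue measure on $\gg_e$ is the Haar measure of $(\gg_e,\ast_e)$; the conclusion then follows from the reproducing identity by mixed-norm H\"older (Benedek--Panzone) with exponents $(p_1',q_1')$, rather than from a convolution inequality. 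If you replace your convolution bound by this uniform estimate, the remainder of your argument --- including the extension of the reproducing formula to $f\in M^{p_1,q_1}_\phi(\pi)$, which is justified as in \cite[Cor.~2.9(1)]{BB10c} together with the Schwartz property of $K(X,\cdot)$ --- goes through and reproduces the paper's proof.
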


\begin{proof}
It follows by \eqref{not_eq1} that the proof will be complete as soon as we have proved that if $p,q\in[1,\infty]$ and $f\in M_\phi^{p,q}(\pi)$, 
then $f\in M_\phi^{\infty,\infty}(\pi)$, that is, 
$\Ac_\phi f\in L^\infty(\gg_e)$. 

In fact, let us define 
$$R_\phi\colon\gg_e\times\gg_e\to\CC,\quad 
R_\phi(X,Y)=(\pi(\exp_G X)\phi\mid \pi(\exp_G Y)\phi)
=(\Ac_\phi(\pi(\exp_G \phi)))(Y).$$
Now let us denote by $\ast_e$ the Baker-Campbell-Hausdorff multiplication on the nilpotent Lie algebra $\gg_e\simeq \gg/\zg$. 
There exists a polynomial map $\alpha\colon \gg_e\times\gg_e\to\RR$ 
such that 
$\pi(\exp_G ((-X)\ast Y))=\ee^{\ie\alpha(-X,Y)}\pi(\exp_G ((-X)\ast_e Y))$ 
(see for instance \cite{Ma07}), 
hence 
$$\begin{aligned}
R_\phi(X,Y)
&=\ee^{-\ie\alpha(-X,Y)}(\phi\mid \pi(\exp_G ((-X)\ast_e Y))\phi) \\
&=\ee^{-\ie\alpha(-X,Y)}(\Ac_\phi\phi)((-X)\ast_e Y).
\end{aligned}$$
Since $\Ac_\phi\phi\in\Sc(\gg_e)$ (see \cite{Pe94}) and the Lebesgue measure on $\gg_e$ coincides with the Haar measure on the nilpotent Lie group $(\gg_e,\ast_e)$, 
it then follows that 
\begin{equation}\label{monot_proof_eq1}
(\forall r,s\in[1,\infty])\quad 
\sup\limits_{X\in\gg_e}\Vert R(X,\cdot)\Vert_{L^{r,s}(\gg_e^1\times\gg_e^2)}<\infty.
\end{equation}
On the other hand, 
note that $R_\phi(X,Y)=(\Ac_\phi(\pi(\exp_G \phi)))(Y)$, hence 
\allowdisplaybreaks
\begin{align}
(\Ac_\phi f)(X)
&=(f\mid\pi(\exp_G X)\phi) \nonumber\\
&=(f\mid 
 \int\limits_{\gg_e}(\Ac_\phi(\pi(\exp_G \phi)))(Y)\pi(\exp_G Y)\phi\de Y) 
  \nonumber\\ 
&=(f\mid 
 \int\limits_{\gg_e}R_\phi(X,Y)\pi(\exp_G Y)\phi\de Y) 
  \nonumber\\
  &= 
 \int\limits_{\gg_e}\overline{R_\phi(X,Y)}(f\mid\pi(\exp_G Y)\phi) \de Y
  \nonumber
\end{align}
whence 
\begin{equation}\label{monot_proof_eq2}
(\forall X\in\gg_e)\quad (\Ac_\phi f)(X)=(\Ac_\phi f\mid R(X,\cdot)), 
\end{equation}
where the right-hand side makes sense since 
$\Ac_\phi f\in\Cc(\gg_e)\cap\Sc'(\gg_e)$ by \cite[Cor.~2.9(1)]{BB10c}, 
while $R(X,\cdot)\in\Sc(\gg_e)$ by \cite{Pe94}.  
If $f\in M_\phi^{p,q}(\pi)$, then it follows by~\eqref{monot_proof_eq2} 
along with H\"older's inequality in mixed-norm spaces (see \cite{BP61})
and~\eqref{monot_proof_eq1} that 
$$\sup\limits_{X\in\gg_e}\vert(\Ac_\phi f)(X)\vert 
\le 
\sup\limits_{X\in\gg_e} (\Vert \Ac_\phi f\Vert_{L^{p,q}(\gg_e^1\times\gg_e^2)}
\Vert R(X,\cdot)\Vert_{L^{p',q'}(\gg_e^1\times\gg_e^2)})<\infty,  $$
where $\frac{1}{p}+\frac{1}{p'}=\frac{1}{q}+\frac{1}{q'}=1$. 
Thus $\Ac_\phi f\in L^\infty(\gg_e)$, and this completes the proof, 
in view of the beginning remark. 
\end{proof}

\begin{corollary}\label{monot_cor}
If the representation $\pi$ is square integrable modulo the center of $G$, 
then for every $p\in[1,\infty]$ the modulation space $M^{p,1}(\pi^\#)$ is a subalgebra of $M^{\infty,1}(\pi^\#)$ endowed with the Moyal product~$\#$.  
\end{corollary}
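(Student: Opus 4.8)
The plan is to combine the monotonicity statement of Theorem~\ref{monot} (transferred from $\pi$ to the semidirect-product representation $\pi^\#$) with the multiplication result of Corollary~\ref{main_cor_mult} and the algebra structure from Theorem~\ref{th_sj}\eqref{th_sj_item2}. First I would observe that the representation $\pi^\#\colon M\ltimes M\to\Bc(L^2(\Xi^*))$ is itself a unitary representation to which the abstract modulation-space machinery applies, and that when $\pi$ is square integrable modulo the center, $\pi^\#$ inherits the analogous square-integrability property along the linear map $\theta\times\theta$; hence Theorem~\ref{monot}, applied to $\pi^\#$ in place of $\pi$, yields the continuous inclusion $M^{p,1}(\pi^\#)\subseteq M^{\infty,1}(\pi^\#)$ for every $p\in[1,\infty]$ (choosing, say, the trivial decomposition with one summand equal to the full predual and the other equal to $\{0\}$ so that the mixed-norm $L^{p,1}$ matches the ordering hypothesis $p_1\le p_2$, $q_1\le q_2=1$).

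Next I would check closure of $M^{p,1}(\pi^\#)$ under the Moyal product. By Corollary~\ref{main_cor_mult} with $p_1=p_2=p$ we have $\frac1{p_1}+\frac1{p_2}=\frac2p$; this is $\frac1{p'}$ for $p'=p/2$ only, which is not in general what we want, so I would instead argue as follows. Apply Corollary~\ref{main_cor_mult} with $p_1=p$, $p_2=\infty$, giving $\frac1{p_1}+\frac1{p_2}=\frac1p$, and with $q_1=q_2=1$, giving $\frac1{q_1}+\frac1{q_2}=2=1+\frac11$, so $q=1$; thus $\#^\theta$ maps $M^{p,1}(\pi^\#)\times M^{\infty,1}(\pi^\#)\to M^{p,1}(\pi^\#)$ continuously. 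Since the inclusion $M^{p,1}(\pi^\#)\hookrightarrow M^{\infty,1}(\pi^\#)$ is continuous, composing with it shows in particular that for $a_1,a_2\in M^{p,1}(\pi^\#)$ we have $a_2\in M^{\infty,1}(\pi^\#)$ and hence $a_1\#^\theta a_2\in M^{p,1}(\pi^\#)$, with a continuity estimate. Therefore $M^{p,1}(\pi^\#)$ is a subspace of $M^{\infty,1}(\pi^\#)$ that is closed under the restriction of $\#^\theta$, i.e.\ a subalgebra.

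The associativity and the Banach-space structure on $M^{p,1}(\pi^\#)$ come for free: associativity is inherited from $M^{\infty,1}(\pi^\#)$ by Theorem~\ref{th_sj}\eqref{th_sj_item2}, and completeness of $M^{p,1}(\pi^\#)$ in its own norm follows by the same Corollary~\ref{main_cor}-based argument as in the proof of Theorem~\ref{th_sj}\eqref{th_sj_item2} (a Cauchy sequence produces a dominating $L^1$ function for the $p=\infty$ part and an $L^{p,1}$ control for the genuine norm, the limit existing in $\Bc(\Hc)$ and being of the form $\Op^\theta(a)$). The one point deserving care—and the main obstacle—is the transfer of the square-integrability hypothesis from $\pi$ to $\pi^\#$ and the verification that Theorem~\ref{monot} is genuinely applicable to $\pi^\#$: one must identify the relevant predual and exponential coordinates for $M\ltimes M$ and confirm that the matrix coefficient $\Wig(\phi,\phi)$ plays the role of a Schwartz window so that the estimate~\eqref{monot_proof_eq1} holds in this enlarged setting. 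Once that is in place, the corollary is immediate.
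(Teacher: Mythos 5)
Your argument is correct and uses the same ingredients as the paper: transfer of square-integrability to $\pi^\#$, the monotonicity of Theorem~\ref{monot} applied to $\pi^\#$ to get $M^{p,1}(\pi^\#)\subseteq M^{\infty,1}(\pi^\#)$, and Corollary~\ref{main_cor_mult} for closure under $\#$. The only real divergence is in the product step: you dismiss the choice $p_1=p_2=p$ because it lands in $M^{p/2,1}(\pi^\#)$, and instead take $p_1=p$, $p_2=\infty$, $q_1=q_2=1$ together with the embedding of the second factor into $M^{\infty,1}(\pi^\#)$. The paper does use $p_1=p_2=p$, and the apparent defect is repaired by the very monotonicity you already invoked: since $p/2\le p$, Theorem~\ref{monot} (for $\pi^\#$) gives $M^{p/2,1}(\pi^\#)\subseteq M^{p,1}(\pi^\#)$, so $a_1\# a_2\in M^{p,1}(\pi^\#)$. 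Both routes are valid and rest on the square-integrability hypothesis only through the inclusions supplied by Theorem~\ref{monot}. Two smaller remarks. First, the transfer you single out as ``the main obstacle'' is not re-proved in the paper; it is settled by citation: $\pi^\#\colon G\ltimes G\to\Bc(L^2(\Oc))$ is square integrable modulo the center and its modulation spaces are window-independent by \cite[Example 3.4(2) and Remark~3.7]{BB10c}, which is exactly what legitimizes writing $M^{p,1}(\pi^\#)$ without a window and applying Theorem~\ref{monot} with $\pi^\#$ in place of $\pi$. Second, your parenthetical about choosing the ``trivial decomposition'' (full predual plus $\{0\}$) is off: that choice would collapse the mixed norm to a single-exponent norm and no longer describe $M^{p,1}(\pi^\#)$; the decomposition to use is the natural two-factor one underlying the definition of the symbol modulation spaces in Remark~\ref{ambiguity}, and no adjustment is needed anyway since with $q_1=q_2=1$ the ordering hypotheses of Theorem~\ref{monot} are already satisfied. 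Finally, the completeness discussion is superfluous for the statement as given, which only asks that $M^{p,1}(\pi^\#)$ be a subalgebra.
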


\begin{proof}
As noted in \cite[Example 3.4(2) and Remark~3.7]{BB10c}, 
the representation  $\pi^\#\colon G\ltimes G\to\Bc(L^2(\Oc))$ is square integrable 
(modulo the center) and its modulation spaces are independent on the choice of the window vector. 
Thus the above Theorem~\ref{monot} applies for the representation $\pi^\#$ instead of $\pi$, and it follows that 
$M^{p_1,q_1}(\pi^\#)\subseteq M^{p_2,q_2}(\pi^\#)$ 
whenever $1\le p_1\le p_2\le\infty$ and $1\le q_1\le q_2\le\infty$. 

In particular we have $M^{p,1}(\pi^\#)\subseteq M^{\infty,1}(\pi^\#)$ 
if $1\le p\le\infty$. 
Moreover, it follows by Corollary~\ref{main_cor_mult} that 
if $a_1,a_2\in M^{p,1}(\pi^\#)$, then 
$a_1\# a_2\in M^{\frac{p}{2},1}(\pi^\#)\subseteq M^{p,1}(\pi^\#)$, 
and this completes the proof. 
\end{proof}

In the special case when $\pi$ is the Schr\"odinger representation of the Heisenberg group, the above result goes back to \cite{To01}; 
see also \cite{HTW07}. 
We also note that in this case we have $\Mc^{\infty,1}(\pi^\#)=M^{\infty,1}(\pi^\#)$.

\subsection*{Acknowledgment} 
The second-named author acknowledges partial financial support
from the Project MTM2007-61446, DGI-FEDER, of the MCYT, Spain, 
and from the grant PNII - Programme ``Idei'' (code 1194).

\end{document}